\theoremstyle{plein}
\newtheorem{theorem}{Theorem}[section]
\newtheorem*{theorem*}{Theorem}
\newtheorem{lemma}[theorem]{Lemma}
\newtheorem{proposition}[theorem]{Proposition}
\newtheorem*{proposition*}{Proposition}
\newtheorem{corollary}[theorem]{Corollary}
\newtheorem*{corollary*}{Corollary}
\theoremstyle{definition}
\newtheorem{example}[theorem]{Example}
\newtheorem{definition}[theorem]{Definition}
\newtheorem{definition*}{Definition}
\newtheorem{remark}[theorem]{Remark}
\newtheorem{remark*}{Remark}
\newtheorem{conjecture*}{Conjecture}
\newtheorem{notation}[theorem]{Notation}
\newcommand{\mB}{{\mathcal B}}
\newcommand{\mC}{{\mathcal C}}
\newcommand{\mD}{{\mathcal D}}
\newcommand{\mF}{{\mathcal F}}
\newcommand{\mG}{{\mathcal G}}
\newcommand{\mJ}{{\mathcal J}}
\newcommand{\mM}{{\mathcal M}}
\newcommand{\mO}{{\mathcal O}}
\newcommand{\mP}{{\mathcal P}}
\newcommand{\mS}{{\mathcal S}}
\newcommand{\mU}{{\mathcal U}}
\newcommand{\mV}{{\mathcal V}}
\newcommand{\mW}{{\mathcal W}}
\newcommand{\A}{{\mathrm A}}
\newcommand{\B}{{\mathrm B}}
\newcommand{\C}{{\mathrm C}}
\newcommand{\F}{{\mathrm F}}
\newcommand{\G}{{\mathrm G}}
\newcommand{\rH}{{\mathrm H}}
\newcommand{\K}{{\mathrm K}}
\renewcommand{\L}{{\mathrm L}}
\newcommand{\M}{{\mathrm M}}
\newcommand{\T}{{\mathrm T}}
\newcommand{\X}{{\mathrm X}}
\newcommand{\Y}{{\mathrm Y}}
\newcommand{\Z}{{\mathrm Z}}
\newcommand{\bj}{{\mathrm j}}
\newcommand{\bi}{{\mathrm i}}
\newcommand{\m}{{\mathrm m}}
\newcommand{\bk}{{\mathrm k}}
\newcommand{\q}{{\mathrm q}}
\newcommand{\n}{{\mathrm n}}
\newcommand{\op}{\mathrm{op}}
\newcommand{\x}{\mathsf{x}}
\newcommand{\y}{\mathsf{y}}
\newcommand{\colim}{\mathrm{colim}}
\newcommand{\LMod}{{\mathrm{LMod}}}
\newcommand{\w}{{\mathrm{w}}}
\newcommand{\ot}{\otimes}
\newcommand{\Ass}{  {\mathrm {   Ass  } }   }
\newcommand{\id}{\mathrm{id}}
\newcommand{\Cat}{\mathsf{Cat}}
\newcommand{\Set}{\mathsf{Set}}
\newcommand{\Alg}{\mathrm{Alg}}
\newcommand{\Fun}{\mathrm{Fun}}
\newcommand{\Op}{{\mathrm{Op}}}
\newcommand{\Fin}{{\mathrm{\mF in}}}
\newcommand{\rev}{{\mathrm{rev}}}
\newcommand{\Mul}{{\mathrm{Mul}}}
\newcommand{\LM}{{\mathrm{LM}}}
\newcommand{\Env}{{\mathrm{Env}}}
\newcommand{\act}{{\mathrm{act}}}
\newcommand{\PreCat}{{\mathrm{PreCat}}}
\newcommand{\BM}{{\mathrm{BM}}}  
\newcommand{\RM}{{\mathrm{RM}}}
\newcommand{\z}{{\mathrm{z}}}
\newcommand{\gen}{{\mathrm{gen}}} 
\newcommand{\Quiv}{{\mathrm{Quiv}}}
\title{An equivalence between two models of $\infty$-categories of enriched presheaves}
\date{}
\begin{document}
\maketitle

\author{Hadrian Heine}

\author{\it{University of Oslo, Oslo, Norway}}

\author{email: hadriah@math.uio.no}

\tableofcontents

\begin{abstract}


Let $\mO \to \BM$ be a $\BM$-operad that exhibits
an $\infty$-category $\mD$ as weakly bitensored over non-symmetric $\infty$-operads $\mV\to \Ass, \mW \to \Ass$ and $\mC$ a $\mV$-enriched $\infty$-precategory.
We construct an equivalence $$\Fun_{\mathrm{Hin}}^\mV(\mC,\mD) \simeq \Fun^\mV(\mC,\mD)$$
of $\infty$-categories weakly right tensored over $\mW$
between Hinich's construction of $\mV$-enriched functors of \cite{HINICH2020107129} and our construction of $\mV$-enriched functors of \cite{HEINE2023108941}.
	
\end{abstract}

\section{Introduction}

Gepner-Haugseng \cite{article} develop a theory of enriched $\infty$-categories
and extend many structural results from enriched category theory to 
enriched $\infty$-category theory.
For any non-symmetric $\infty$-operad $\mV$ the authors define $\mV$-enriched $\infty$-precategories
- under the name categorical algebras in $\mV$- as a many object version of associative algebras in $\mV$ \cite[Definition 4.3.1.]{article}. 
Moreover they associate to any $\mV$-enriched $\infty$-precategory a Segal space \cite[Definition 5.1.13]{article} and define $\mV$-enriched $\infty$-categories as those $\mV$-enriched $\infty$-precategories whose associated Segal space is complete \cite[Definition 5.2.2.]{article}.
Gepner-Haugseng define 
these many object versions of associative algebras as algebras over a
many object version of the associative $\infty$-operad:
they construct for every space $\X$ a generalized non-symmetric $\infty$-operad
$\Ass_\X$, which agrees with the associative $\infty$-operad if $\X$ is contractible,
and define $\mV$-enriched $\infty$-precategories with space of objects $\X$
as $\Ass_\X$-algebras in $\mV$ \cite[Definition 2.4.5.]{article}.

Haugseng \cite{haugseng_2016} defines profunctors of enriched $\infty$-categories as a many object version of bi-modules. To define these many object versions of bi-modules he extends the generalized non-symmetric $\infty$-operad $\Ass_\X$ to a generalized $\BM$-operad: (generalized) $\BM$-operads were studied by Lurie
\cite[4.3.1]{lurie.higheralgebra} and Haugseng \cite{haugseng_2016} 
to describe weak bi-actions of two (generalized) non-symmetric $\infty$-operads on an $\infty$-category, a weakening of a bi-action of two monoidal $\infty$-categories on an $\infty$-category. 
To define enriched profunctors Haugseng constructs for every spaces $\X,\Y$ a generalized $\BM$-operad $\BM_{\X,\Y} $ that encodes a weak bi-action of $\Ass_\X,\Ass_\Y$ on the space $\X \times \Y$ and agrees with the final $\BM$-operad governing bi-modules if $\X,\Y$ are contractible \cite[Definition 4.1.]{haugseng_2016}.
He defines $\mV$-enriched profunctors from a $\mV$-enriched $\infty$-precategory with space of objects $\X$ to a $\mV$-enriched $\infty$-precategory with space of objects $\Y$ as $\BM_{\X,\Y}$-algebras in $\mV$ \cite[Definition 4.3.]{haugseng_2016}.



Hinich \cite{HINICH2020107129} constructs a Yoneda-embedding
for $\mV$-enriched $\infty$-categories.
To define $\mV$-enriched presheaves he defines and studies $\mV$-enriched functors. He constructs for every $\mV$-enriched $\infty$-precategory $\mC$ with space of objects $\X$ and $\infty$-category $\mM$ with weak bi-action of non-symmetric $\infty$-operads $\mV,\mW$ an $\infty$-category $\Fun_{\mathrm{Hin}}^\mV(\mC,\mM)$ of $\mV$-enriched functors $\mC \to \mM$ that carries a weak right $\mW$-action \cite[Definition 6.1.3.]{HINICH2020107129}.
To perform this construction Hinich produces for every space $\X$ a non-symmetric $\infty$-operad $\Quiv_\X(\mV)$ whose colors are $\X$-quivers in $\mV$, i.e. functors $\X \times \X \to \mV$, that weakly acts from the left on the $\infty$-category of functors $\X \to \mM$ compatible with the diagonal right $\mW$-action.
Then he presents $\mV$-enriched $\infty$-precategories with space of objects $\X$ as associative algebras in $\Quiv_\X(\mV)$ \cite[Proposition 3.4.4.]{HINICH2020107129} and defines the $\infty$-category
$\Fun_{\mathrm{Hin}}^\mV(\mC,\mM)$ of $\mV$-enriched functors $\mC \to \mM$ 
as the $\infty$-category of left $\mC$-modules 
with respect to the weak left action of $\Quiv_\X(\mV)$ on the $\infty$-category of functors $\X \to \mM$ 
\cite[Definition 6.1.3.]{HINICH2020107129}.
The $\infty$-category $\Fun_{\mathrm{Hin}}^\mV(\mC,\mM)$ inherits a weak right $\mW$-action from the diagonal weak right $\mW$-action on the $\infty$-category of functors $\X \to \mM$.
Hinich constructs the non-symmetric $\infty$-operad $\Quiv_\X(\mV)$ 
via Day-convolution from a non-symmetric $\infty$-operad $\mathfrak{Ass}_\X$
\cite[3.2]{HINICH2020107129}:
he proves that the endofunctor $(-) \times_\Ass \mathfrak{Ass}_\X$
of the $\infty$-category of non-symmetric $\infty$-operads admits a right adjoint 
\cite[Proposition 3.3.6]{HINICH2020107129}
that sends $\mV$ to the non-symmetric $\infty$-operad $\Quiv_\X(\mV).$
Adjointness implies that associative algebras in $\Quiv_\X(\mV)$ are classified by $\mathfrak{Ass}_\X$-algebras in $\mV.$
By a result of Macpherson \cite[Theorem 1.1.]{macpherson_2020} the latter are equivalent to $\Ass_\X$-algebras in $\mV$ and so are equivalent to Gepner-Haugseng's model for $\mV$-enriched $\infty$-precategories with space of objects $\X$.
In a similar way Hinich constructs the weak bi-action of $\Quiv_\X(\mV),\mW$ on the $\infty$-category of functors $\X \to \mM$ via Day-convolution from a $\BM$-operad $\mathfrak{BM}_\X$ \cite[3.2]{HINICH2020107129}.

In \cite{HEINE2023108941} we define an $\infty$-category $\Fun^\mV(\mC,\mM)$ of
$\mV$-enriched functors $\mC \to \mM$ 
by performing a similar construction like Hinich but using 
Haugseng's generalized $\BM$-operad $\BM_\X:=\BM_{\X,*}$ instead of Hinich's
$\BM$-operad $\mathfrak{BM}_\X$.
We use the first since Hinich's $\BM$-operad $\mathfrak{BM}_\X$ is very elaborate to construct and combinatorially very challenging while Haugseng's $\BM_\X$ is combinatorially very simple. This simplifies many of our proofs in $ \cite{HEINE2023108941}$ involving $\BM_\X$ \cite[Lemma 5.10., Lemma 5.23.]{HEINE2023108941}.
We use the $\infty$-category $\Fun^\mV(\mC,\mV)$ 
to view $\mV$-enriched $\infty$-categories in the sense of Gepner-Haugseng as 
$\infty$-categories with weak left $\mV$-action.
This way we construct a functor $\L$ from $\mV$-enriched $\infty$-categories
in the sense of Gepner-Haugseng to a special class of $\infty$-categories
with weak left $\mV$-action, which were proposed by Lurie as a model for
$\mV$-enriched $\infty$-categories \cite[Definition 4.2.1.28.]{lurie.higheralgebra}.
We prove that the functor $\L$ gives an equivalence between Gepner-Haugseng's and Lurie's model of enriched $\infty$-categories \cite[Theorem 6.7.]{HEINE2023108941}.
We obtain as a corollary that 
for every $\mV$-enriched $\infty$-categories $\mC,\mD$ there is an equivalence \cite[Proposition 4.68.]{HEINE2023108941}:
\begin{equation}\label{laab}\Fun^\mV(\mC,\L(\mD))^\simeq \simeq \PreCat^{\mV}_\infty(\mC, \mD).\end{equation}
Equivalence (\ref{laab}) views objects of $\Fun^\mV(\mC,\L(\mD))$ as
$\mV$-enriched functors $\mC\to \mD$ in the theory of Gepner-Haugseng
and motivates us to consider $\Fun^\mV(\mC,\L(\mD))$ as a model for the $\infty$-category of $\mV$-enriched functors $\mC \to \mD.$

It is the goal of this article to identify 
Hinich's $\infty$-category $\Fun_{\mathrm{Hin}}^\mV(\mC,\mM)$ of $\mV$-enriched functors $\mC\to\mM$
with our $\infty$-category $\Fun^\mV(\mC,\mM)$ of $\mV$-enriched functors $\mC\to \mM$ to connect our both theories.
To achieve that we prove the following theorem: 


\begin{theorem}\label{Theo} [Theorem \ref{theo}]
Let $\X$ be a small space.
There is a map $ \BM_{\X} \to \mathfrak{BM}_{\X}$ of generalized 
$\BM$-operads that induces for every $\BM$-operad $\mO \to \BM$
an equivalence of $\BM$-operads $$\Fun^{\BM}(\mathfrak{BM}_\X,\mO) \to \Fun^{\BM}(\BM_\X,\mO).$$
	
\end{theorem}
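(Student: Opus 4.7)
The strategy is to construct the map $\BM_\X \to \mB\mM_\X$ combinatorially from Macpherson's comparison, to reduce the equivalence of internal hom $\BM$-operads to the statement that this comparison map itself becomes an equivalence after operadic localization, and finally to verify this by reducing to Macpherson's theorem \cite[Theorem 1.1.]{macpherson_2020} extended from $\Ass$ to $\BM$.

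For the construction I would proceed from the explicit and combinatorially simple definition of $\BM_\X$ given in \cite{HEINE2023108941}. Recall that $\mB\mM_\X$ is built by Hinich in two stages: first as a generalized non-symmetric $\infty$-operad $\mA\mathrm{ss}_\X \to \Ass$, then extended along the left embedding $\Ass \subset \BM$. Macpherson supplies a map $\Ass_\X \to \mA\mathrm{ss}_\X$ of generalized non-symmetric $\infty$-operads inducing equivalences on algebras in any non-symmetric $\infty$-operad. I would upgrade this to the desired map $\BM_\X \to \mB\mM_\X$ of generalized $\BM$-operads by applying the $\BM$-extension procedure to Macpherson's map; the required compatibility should be tautological from the explicit formulas defining $\BM_\X$, since both generalized $\BM$-operads share the same underlying $\X$-indexed bimodule combinatorics and differ only in the $\Ass$-part.

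For the main claim I would invoke Hinich's adjunction $(-) \times_\BM \mB\mM_\X \dashv \Fun^\BM(\mB\mM_\X, -)$ from \cite[Proposition 3.3.6]{HINICH2020107129} together with its analogue for $\BM_\X$. By this adjunction and the Yoneda lemma (testing against all $\BM$-operads $\mQ$ and $\mO$), the desired equivalence of $\BM$-operads $\Fun^\BM(\mB\mM_\X, \mO) \to \Fun^\BM(\BM_\X, \mO)$ is equivalent to the statement that, for every $\BM$-operad $\mQ$, the natural map
$$\mQ \times_\BM \BM_\X \to \mQ \times_\BM \mB\mM_\X$$
becomes invertible in the $\infty$-category of $\BM$-operads. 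A descent argument in $\mQ$ then reduces the problem to the universal case $\mQ = \BM$, i.e., to showing that $\BM_\X \to \mB\mM_\X$ itself becomes an equivalence of $\BM$-operads after operadic localization of generalized $\BM$-operads.

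This last step is the main obstacle. By standard operadic yoga, operadic equivalence amounts to showing that for every $\BM$-operad $\mO$, restriction along $\BM_\X \to \mB\mM_\X$ induces an equivalence on algebra spaces $\Alg_{\mB\mM_\X}(\mO) \to \Alg_{\BM_\X}(\mO)$. Via the respective $\BM$-extension adjunctions, both sides identify with spaces of $\X$-indexed associative algebra and bimodule data in $\mO$: for $\mB\mM_\X$ this is Hinich's classification of $\mA\mathrm{ss}_\X$-algebras with compatible bimodule structure, while for $\BM_\X$ the simpler combinatorics of \cite{HEINE2023108941} yields an analogous classification in terms of $\Ass_\X$-style data. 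Macpherson's equivalence handles the $\Ass$-part on the nose, so the technical heart of the argument is a naturality check that the $\BM$-extension interacts correctly with Macpherson's comparison map — in particular, that the left and right embeddings $\Ass \subset \BM$ produce compatibly the same bimodule data on both sides.
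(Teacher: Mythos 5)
There is a genuine gap in the main step. Your reduction of the theorem to the ``universal case $\mQ = \BM$'' is not justified: by adjunction, the asserted equivalence of $\BM$-operads $\Fun^{\BM}(\B\mM_\X,\mO) \to \Fun^{\BM}(\BM_\X,\mO)$ amounts to the statement that $\Map(\mQ \times_\BM \B\mM_\X, \mO) \to \Map(\mQ \times_\BM \BM_\X, \mO)$ is an equivalence for \emph{every} $\BM$-operad $\mQ$, whereas taking $\mQ = \BM$ only recovers the comparison of algebra spaces $\Alg_{\B\mM_\X}(\mO)^\simeq \to \Alg_{\BM_\X}(\mO)^\simeq$, i.e.\ the underlying spaces of the internal homs, which is strictly weaker than an equivalence of $\BM$-operads. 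Operadic localization (assembly) does not commute with base change along $\mQ \times_\BM (-)$, and no descent mechanism is offered; moreover for general $\mQ$ the generalized $\BM$-operads $\mQ \times_\BM \BM_\X$ and $\mQ \times_\BM \B\mM_\X$ are no longer of the form to which Macpherson's theorem applies. This is exactly the difficulty the paper circumvents by a different route: it checks that the comparison is an equivalence on the fibers over $\mathfrak{a}, \mathfrak{b}, \mathfrak{m}$, embeds $\mO$ into the Day-convolution presheaves $\mP\Env_\BM(\mO)$ on its monoidal envelope, observes that there both internal homs are $\BM$-monoidal $\infty$-categories (cocartesian over $\BM$) and the comparison is a $\BM$-monoidal functor, so that a fiberwise equivalence is automatically an equivalence, and finally recovers $\gamma_\mO$ as a pullback of this equivalence.

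The construction of the map $\BM_\X \to \B\mM_\X$ is also under-argued. $\BM_\X$ is not defined as an extension of $\Ass_\X$ along $\Ass \subset \BM$, and Hinich's $\B\mM_\X$ is only accessible through the functor $\F: \Delta_{/\BM} \to \Cat_\infty$ and the restricted Yoneda embedding into $\mP(\Delta_{/\BM})$; there is no ``$\BM$-extension procedure'' one can simply apply to Macpherson's map $\Ass_\X \to \mA\mathrm{ss}_\X$. Producing the map requires an explicit combinatorial comparison -- in the paper, a natural transformation $\gamma: \mF \to \mG = ((-)\times_\BM \mW)^\sim$ of functors $\Delta_{/\BM} \to \mS$, including the nontrivial verification in simplicial degree one that the relevant composites agree on the gluing data -- and this is precisely what your appeal to a ``tautological compatibility'' skips. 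Finally, Macpherson's theorem only compares algebras over $\Ass_\X$ and $\mA\mathrm{ss}_\X$, i.e.\ the $\mathfrak{a}$-part; it gives no control over the $\mathfrak{m}$-colored (module/quiver) part, which is the technical heart you defer to an unspecified ``naturality check,'' and the paper's intent is in any case to obtain Macpherson's result as a corollary rather than use it as an input. So both the construction of the comparison map and the core of the equivalence remain open in your outline.
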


Passing to left modules we immediately obtain the following corollary:
\begin{corollary}[Corollary \ref{Hi}]
	
Let $\mO \to \BM$ be a $\BM$-operad that exhibits
an $\infty$-category $\mM$ as weakly bitensored over non-symmetric $\infty$-operads $\mV, \mW$ and $\mC$ a $\mV$-enriched $\infty$-precategory with small space of objects $\X.$
There is a canonical equivalence $$\Fun_{\mathrm{Hin}}^\mV(\mC,\mM) \simeq \Fun^\mV(\mC,\mM)$$
of $\infty$-categories weakly right tensored over $\mW.$
	
\end{corollary}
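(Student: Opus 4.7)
The plan is to first construct the comparison map $\BM_\X \to \B\mM_\X$ of generalized $\BM$-operads over $\BM$. Both objects share the space of colors $\X$ on the bimodule cell of $\BM$ and encode finite decorated sequences of objects of $\X$. Since Hinich's $\B\mM_\X$ is defined in \cite[3.2.11.]{HINICH2020107129} by extending the generalized non-symmetric $\infty$-operad $\mA\mathrm{ss}_\X$ along the left embedding $\Ass \subset \BM$, a natural route is to exploit Macpherson's comparison $\Ass_\X \to \mA\mathrm{ss}_\X$ from \cite{macpherson_2020} --- whose source is essentially the $\Ass$-restriction of $\BM_\X$ --- and then extend the resulting map across $\BM$ using functoriality of the two $\BM$-extension constructions.

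Having produced the map, I would prove that it induces the required equivalence by comparing the universal properties represented by both sides. By Hinich's adjointness \cite[Proposition 3.3.6]{HINICH2020107129}, the $\BM$-operad $\Fun^\BM(\B\mM_\X,\mO)$ is identified with the right adjoint value $\Quiv^\BM_\X(\mO)$, which encodes a weak biaction of $\Quiv_\X(\mV), \mW$ on $\Fun(\X,\mD)$. The simpler combinatorics of $\BM_\X$ should yield a parallel direct description of $\Fun^\BM(\BM_\X,\mO)$ as the same ``diagonal'' biaction on $\Fun(\X,\mD)$, recorded in \cite{HEINE2023108941}. Since the $\BM$-operad structure on both sides is inherited pointwise from $\mO$, it suffices to check that the induced map agrees with the identity on the common datum, reducing the equivalence of $\BM$-operads to an equivalence of underlying $\infty$-categories of colors and of multi-mapping spaces.

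The main obstacle will be bridging Hinich's abstract construction of $\B\mM_\X$, built via Day convolution and right adjoints, with Heine's explicit combinatorial model $\BM_\X$. Rather than compare the two descriptions directly, I would argue purely through universal properties: the critical check is that the map $\BM_\X \to \B\mM_\X$ sends each cell of $\BM_\X$ to its Hinich counterpart in a way compatible with the $\BM$-grading. On the $\Ass$-part of $\BM$ this is Macpherson's theorem; on the bimodule cell one verifies directly from the combinatorial definition of $\BM_\X$ that the induced data on $\mO_{\mathfrak{m}} = \mD$ matches the data produced by Hinich's adjoint. Once the comparison is verified on multi-mapping spaces, the equivalence of full functor $\BM$-operads follows formally, and the corollary on left modules is immediate.
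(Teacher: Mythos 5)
Your final step (apply $\LMod_\mC(-)$ to an equivalence $\Fun^{\BM}(\B\mM_\X,\mO)\simeq\Fun^{\BM}(\BM_\X,\mO)$ and read off the weak right $\mW$-action) is exactly how the paper deduces the corollary, but the two inputs you need for that step are not established by your sketch. First, the construction of the comparison map $\BM_\X\to\B\mM_\X$: you propose to take Macpherson's map $\Ass_\X\to\mA\mathrm{ss}_\X$ and ``extend it across $\BM$ using functoriality of the two $\BM$-extension constructions.'' But $\BM_\X$ is not defined by any extension-along-$\Ass\subset\BM$ procedure --- it is a left fibration classifying an explicit functor --- so there is no extension functor, and no universal property of $\BM_\X$, to which such functoriality could be applied. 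Note also that Hinich's extension $\B\mM_\X$ of $\mA\mathrm{ss}_\X$ is a left-Kan-extension-type construction, whose universal property governs maps \emph{out of} $\B\mM_\X$, whereas you need a map \emph{into} it; combined with the fact that Macpherson's map points from $\Ass_\X$ to $\mA\mathrm{ss}_\X$, the directions do not line up to produce $\BM_\X\to\B\mM_\X$ formally. The paper instead builds this map by hand: it uses that $\Cat_{\infty/\BM}\hookrightarrow\mP(\Delta_{/\BM})$ is fully faithful, identifies $\Cat_{\infty/\BM}(\alpha,\B\mM_\X)\simeq\mS(\mF(\alpha),\X)$ via Hinich's adjunction and $\Cat_{\infty/\BM}(\alpha,\BM_\X)\simeq\mS((\alpha\times_\BM\mW)^\sim,\X)$ via free fibrations, and then constructs an explicit combinatorial natural transformation $\mF\to((-)\times_\BM\mW)^\sim$ on the generators $[0],[1]$ of $\Delta_{/\BM}$. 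Some such concrete (or genuinely universal) argument is unavoidable, and Macpherson's theorem does not substitute for it (the paper in fact rederives Macpherson's result as a corollary).

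Second, the equivalence itself: saying that it suffices to check the map on colors and on multi-mapping spaces is just the definition of an equivalence of $\BM$-operads; the substantive issue is precisely the multi-mapping spaces, and your claim that ``the $\BM$-operad structure on both sides is inherited pointwise from $\mO$'' is not correct --- the multimorphism spaces of $\Fun^{\BM}(\B\mM_\X,\mO)$ and $\Fun^{\BM}(\BM_\X,\mO)$ are given by Day-convolution/quiver-type formulas involving limits over $\X$, not pointwise data. The paper bridges this gap by a reduction you are missing: the comparison is easily seen to be an equivalence on the fibers over $\mathfrak{a},\mathfrak{b},\mathfrak{m}$; one then embeds $\mO$ into $\mP\Env_\BM(\mO)$, a $\BM$-monoidal $\infty$-category compatible with small colimits, where both functor constructions are $\BM$-monoidal and the comparison map is a $\BM$-monoidal functor, so that a fiberwise equivalence is automatically an equivalence of $\BM$-operads; finally $\gamma_\mO$ is recovered as a pullback of $\gamma_{\mP\Env_\BM(\mO)}$. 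Without this (or an honest direct computation of multimorphism spaces), your argument does not close.
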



Taking the pullback along the left embedding $\Ass \subset \BM$ and using that every non-symmetric $\infty$-operad $\mV \to \Ass$ gives rise to a weak biaction of $\mV, \mV$ on $\mV$, Theorem \ref{Theo} implies the following corollary:
\begin{corollary}[Corollary \ref{Fea}]
Let $\X$ be a small space.
There is a map $ \Ass_{\X} \to \mathfrak{Ass}_{\X}$ of generalized 
non-symmetric $\infty$-operads such that for every non-symmetric $\infty$-operad $\mV^\ot \to \Ass$
the induced map of non-symmetric $\infty$-operads $$\Fun^{\Ass}(\mathfrak{Ass}_{\X},\mV) \to \Fun^{\Ass}(\Ass_\X,\mV)$$ is an equivalence.	
	
\end{corollary}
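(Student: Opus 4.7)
The plan is to reduce the statement to Theorem~\ref{Theo} by pulling back along the left embedding $\iota: \Ass \hookrightarrow \BM$. First, I will use the compatibility facts $\B\mM_\X \times_\BM \Ass \simeq \mA\mathrm{ss}_\X$, which is built into Hinich's construction (see \cite[3.2.11.]{HINICH2020107129}), together with the analogous $\BM_\X \times_\BM \Ass \simeq \Ass_\X$ from \cite{HEINE2023108941}. The required map $\Ass_\X \to \mA\mathrm{ss}_\X$ of generalized non-symmetric $\infty$-operads is then obtained by pulling the map $\BM_\X \to \B\mM_\X$ of Theorem~\ref{Theo} back along $\iota$.

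Next, given a non-symmetric $\infty$-operad $\mV^\ot \to \Ass$, I build an auxiliary $\BM$-operad $\bar{\mV} \to \BM$ exhibiting $\mV$ as weakly bitensored over $\mV$ on both sides, using the self-biaction of $\mV^\ot$ coming from its monoidal structure (this is the construction alluded to in the introduction just before Corollary~\ref{Fea}). By design $\bar{\mV} \times_\BM \Ass \simeq \mV^\ot$, and the assignment $\mV \mapsto \bar{\mV}$ is right adjoint to the pullback $\iota^* = (-)\times_\BM \Ass$ in a way that is compatible with internal operadic Homs: for every generalized $\BM$-operad $\mU \to \BM$ one has a canonical base-change equivalence
$$\Fun^\BM(\mU, \bar{\mV}) \times_\BM \Ass \simeq \Fun^\Ass(\mU \times_\BM \Ass, \mV^\ot).$$

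Applying Theorem~\ref{Theo} to $\mO = \bar{\mV}$ yields an equivalence of $\BM$-operads $\Fun^\BM(\B\mM_\X, \bar{\mV}) \to \Fun^\BM(\BM_\X, \bar{\mV})$ covering $\BM_\X \to \B\mM_\X$. Pulling this equivalence back along $\iota$ and applying the base-change equivalence to both sides, using the two identifications of pullbacks from the first paragraph, produces exactly the claimed equivalence
$$\Fun^\Ass(\mA\mathrm{ss}_\X, \mV^\ot) \to \Fun^\Ass(\Ass_\X, \mV^\ot)$$
of non-symmetric $\infty$-operads.

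The main obstacle is verifying the base-change equivalence displayed above, i.e.\ showing that the extension $\mV \mapsto \bar{\mV}$ is genuinely compatible with the internal operadic Hom when restricted along $\iota^*$. Abstractly this should follow from the fact that $\iota$ is fully faithful together with the fact that, for a right adjoint $\iota_*$ to $\iota^*$ enhanced at the level of internal Homs, one has $\iota^*\Fun^\BM(\mU, \iota_*\mV) \simeq \Fun^\Ass(\iota^*\mU, \mV)$ by the usual Yoneda/adjunction manipulation; once $\bar{\mV}$ is identified with $\iota_*\mV^\ot$ (or shown to agree with it at the level of operadic mapping objects), the rest of the deduction from Theorem~\ref{Theo} is formal.
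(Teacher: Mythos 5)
Your overall route is the same as the paper's: form the $\BM$-operad encoding the self-biaction of $\mV$ (the paper takes $\mO:=\mV^{\ot}\times_{\Ass}\BM$, the pullback along the canonical projection $q\colon\BM\to\Ass$), apply Theorem \ref{theo} to it, and then identify the pullback of $\gamma_{\mO}$ along the left embedding $\iota\colon\Ass\subset\BM$ with $\beta_{\mV}$; the paper disposes of this last identification by citing \cite[Remark 10.5.]{HEINE2023108941}, which is exactly your displayed base-change equivalence (note $\iota^{*}\bar{\mV}\simeq\mV^{\ot}$). So the skeleton is fine, and your first and third paragraphs match the paper's proof.

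The gap is in your proposed justification of the base-change step. You want to identify $\bar{\mV}$ with $\iota_{*}\mV^{\ot}$, a right adjoint of $\iota^{*}=(-)\times_{\BM}\Ass$, and then run the formal adjunction/Yoneda argument. But the self-biaction extension is $q^{*}\mV^{\ot}=\mV^{\ot}\times_{\Ass}\BM$, and this is \emph{not} right adjoint to $\iota^{*}$: testing against the terminal generalized $\BM$-operad, a right adjoint would have to satisfy $\Alg_{\BM}(\iota_{*}\mV^{\ot})\simeq\Alg_{\Ass}(\mV)$, whereas $\Alg_{\BM}(q^{*}\mV^{\ot})$ consists of pairs of associative algebras $\A,\B$ in $\mV$ together with an $\A$-$\B$-bimodule in $\mV$. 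The actual right adjoint of $\iota^{*}$ extends $\mV^{\ot}$ by \emph{trivial} $\mathfrak{b}$- and $\mathfrak{m}$-components, not by the self-biaction, so "once $\bar{\mV}$ is identified with $\iota_{*}\mV^{\ot}$" cannot be carried out and your formal deduction of the displayed equivalence collapses. The statement you actually need is the general compatibility $\iota^{*}\Fun^{\BM}(\mU,\mO)\simeq\Fun^{\Ass}(\iota^{*}\mU,\iota^{*}\mO)$ for an arbitrary $\BM$-operad $\mO$ (applied to $\mO=q^{*}\mV^{\ot}$); this is what \cite[Remark 10.5.]{HEINE2023108941} provides, and if you want to prove it rather than cite it, the relevant input is not an adjunction $\iota^{*}\dashv\iota_{*}$ but a projection formula for the postcomposition functor $\iota_{!}$ (using that the left-embedded $\Ass$ is closed under morphisms out of it in $\BM$, so $\iota_{!}$ preserves generalized operads and $\iota_{!}(\mA)\times_{\BM}\mU\simeq\iota_{!}(\mA\times_{\Ass}\iota^{*}\mU)$). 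With that replacement, the rest of your argument goes through and agrees with the paper.
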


Passing to associative algebras and using the universal property of the Day-convolution we obtain an alternative proof of Macpherson's result \cite[Theorem 1.1.]{macpherson_2020}:
\begin{corollary}
Let $\X$ be a small space.
There is a map $ \Ass_{\X} \to \mathfrak{Ass}_{\X}$ of generalized 
non-symmetric $\infty$-operads such that for every non-symmetric $\infty$-operad $\mV^\ot \to \Ass$
the induced functor $$\Alg_{\mathfrak{Ass}_{\X}}(\mV) \to \Alg_{\Ass_\X}(\mV)$$ is an equivalence.	
	
\end{corollary}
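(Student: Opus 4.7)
The plan is to deduce this corollary as a short formal consequence of Corollary \ref{Fea} combined with the universal property of Day-convolution. Corollary \ref{Fea} provides a map of generalized non-symmetric $\infty$-operads $\Ass_\X \to \mA\mathrm{ss}_\X$ such that for every non-symmetric $\infty$-operad $\mV^\ot \to \Ass$, the induced map
\[
\Fun^{\Ass}(\mA\mathrm{ss}_\X, \mV) \longrightarrow \Fun^{\Ass}(\Ass_\X, \mV)
\]
is an equivalence of non-symmetric $\infty$-operads. I would take this as the sole input and derive the statement about algebras purely by functoriality.

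First I would apply the functor $\Alg(-)$, which sends a non-symmetric $\infty$-operad to its $\infty$-category of associative algebras, to the above equivalence. Since $\Alg(-)$ is a functor on non-symmetric $\infty$-operads and localizations, it carries equivalences to equivalences, yielding an equivalence
\[
\Alg\bigl(\Fun^{\Ass}(\mA\mathrm{ss}_\X, \mV)\bigr) \longrightarrow \Alg\bigl(\Fun^{\Ass}(\Ass_\X, \mV)\bigr).
\]

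Next I would invoke the universal property of Day-convolution for generalized non-symmetric $\infty$-operads: for any generalized non-symmetric $\infty$-operad $\mA \to \Ass$ and non-symmetric $\infty$-operad $\mV^\ot \to \Ass$, there is a natural equivalence $\Alg(\Fun^{\Ass}(\mA, \mV)) \simeq \Alg_\mA(\mV)$. This is precisely the adjointness result that Hinich uses in \cite[Proposition 3.3.6]{HINICH2020107129} to characterize $\Quiv_\X(\mV) = \Fun^{\Ass}(\mA\mathrm{ss}_\X, \mV)$, and the same statement for $\Ass_\X$ in place of $\mA\mathrm{ss}_\X$ follows from the universal property applied to the generalized non-symmetric $\infty$-operad $\Ass_\X \to \Ass$. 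Substituting, one obtains the required equivalence $\Alg_{\mA\mathrm{ss}_\X}(\mV) \to \Alg_{\Ass_\X}(\mV)$, naturally compatible with the map $\Ass_\X \to \mA\mathrm{ss}_\X$.

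There is essentially no new obstacle in this deduction; all the genuine work has been done in Theorem \ref{Theo} (whose proof is the core of the article) and transported through Corollary \ref{Fea} via pullback along $\Ass \subset \BM$. The only mild point to verify is that the adjointness providing the equivalence $\Alg(\Fun^{\Ass}(\mA, \mV)) \simeq \Alg_\mA(\mV)$ applies uniformly to both $\Ass_\X$ and $\mA\mathrm{ss}_\X$ as generalized non-symmetric $\infty$-operads and is natural in maps between them, so that the equivalence of algebra $\infty$-categories is indeed induced by the given map $\Ass_\X \to \mA\mathrm{ss}_\X$.
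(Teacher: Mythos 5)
Your proposal is correct and follows essentially the same route as the paper: the paper also deduces the statement from the equivalence $\beta_\mV\colon \Fun^{\Ass}(\mA\mathrm{ss}_\X,\mV) \to \Fun^{\Ass}(\Ass_\X,\mV)$ of Corollary \ref{Fea} by passing to associative algebras and using the Day-convolution adjunction $\Alg(\Fun^{\Ass}(\mA,\mV)) \simeq \Alg_{\mA}(\mV)$, natural in $\mA$, to identify the induced functor with restriction $\alpha_\mV$ along $\Ass_\X \to \mA\mathrm{ss}_\X$. No gaps to report.
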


\subsection{Acknowledgements}

This paper arose during my postdoc at EPFL Lausanne in the group of Kathryn Hess.
I am grateful for the hospitality during this time.
Moreover I thank the referee for many helpful comments.

\subsection{Notation and terminology}


We write 
\begin{itemize}
\item $\Set$ for the category of small sets.
\item $\Delta$ for the category of finite, non-empty, totally ordered sets and order preserving maps.
\item $\mS$ for the $\infty$-category of small spaces.
\item $ \Cat_\infty$ for the $\infty$-category of small $\infty$-categories.

\end{itemize}





\vspace{1mm}

For any $\infty$-category $\mB$ and $\infty$-category $\mC$ containing objects $\A, \B$ we write
\begin{itemize}
\item $\mC(\A,\B)$ for the space of maps $\A \to \B$ in $\mC$,
\item $\mC_{/\A}$ for the $\infty$-category of objects over $\A$,
\item $\mC^\simeq $ for the maximal subspace in $\mC$.
\item $\Fun(\mB,\mC)$ for the $\infty$-category of functors $\mB\to\mC$.
\end{itemize}

\section{The generalized $\BM$-operad $\BM_{\X,\Y}$}

In this article we will heavily deal with the notions of (generalized)
$\Ass$-operads, $\LM$-operads, $\RM$-operads and $\BM$-operads, where $$\Ass:=\Delta^\op, \ \BM:=(\Delta_{/[1]})^\op,$$ 
$\LM \subset \BM$ is the full subcategory of functors $[\n] \to [1]$ that send at most one object to 1 and $\RM \subset \BM$ is the full subcategory of functors $[\n] \to [1]$ that send at most one object to 0.
See \cite[Definition 3.1.3., 3.1.13.]{article} for the definitions of (generalized)
$\Ass$-operads and \cite[Definition 2.9., Lemma 2.15.]{HEINE2023108941} and \cite[2.9.2.]{HINICH2020107129} 
for the definitions of (generalized) $\LM$-operads, $\RM$-operads and $\BM$-operads.
The forgetful functor $\Delta_{/[1]} \to \Delta$ is opposite to a functor $\BM \to \Ass$ that exhibits $\BM$ as a generalized $\Ass$-operads \cite[Remark 2.13.]{HEINE2023108941}.
There are embeddings $\Ass \hookrightarrow\LM \subset \BM, \Ass \hookrightarrow\RM \subset \BM$ induced by the maps $\{0\}\subset [1], \{1\}\subset [1].$

We write $\mathfrak{a}$ for the constant map $[1] \to \{0\}\subset [1]$,
$\mathfrak{b}$ for the constant map $[1] \to \{1\}\subset [1]$
and $\mathfrak{m} $ for the identity of $[1]$.
We say that a generalized $\LM$-operad $\mO \to \LM$ exhibits $\mO_{\mathfrak{m}}$ as weakly left tensored over $\mO_{\mathfrak{a}}$,
that a generalized $\RM$-operad $\mO \to \RM$ exhibits $\mO_{\mathfrak{m}}$ as weakly right tensored over $\mO_{\mathfrak{b}}$, and that a generalized $\BM$-operad $\mO \to \BM$ exhibits $\mO_{\mathfrak{m}}$ as weakly bitensored over $\mO_{\mathfrak{a}}, \mO_{\mathfrak{b}}.$

\begin{notation}
We write $ \Op_\infty^{\Ass,\gen}$ for the $\infty$-category of small generalized $\Ass$-operads, which is a subcategory of $\Cat_{\infty / \Ass},$ and we write 
$ \Op^\Ass_\infty \subset \Op_\infty^{\Ass,\gen} $ for the full subcategory of $\Ass$-operads. 
We write $ \Op_\infty^{\BM,\gen}$ for the $\infty$-category of small generalized $\BM$-operads, which is a subcategory of $\Cat_{\infty / \BM},$ and we write 
$ \Op^\BM_\infty \subset \Op_\infty^{\BM,\gen} $ for the full subcategory of $\BM$-operads.
\end{notation}

\begin{remark}\label{aaa}
The opposite poset involution on $\Delta$
gives rise to an involution on $\Ass=\Delta^\op$, which we denote by $\tau$,
and to an involution $\bar{\tau}$ on $\BM=(\Delta_{/[1]})^\op.$
Taking pullback along these involutions gives rise to involutions
on $\Op_\infty^{\Ass,\gen}$ and $\Op_{\infty}^{\BM,\gen}$
that restict to involutions on $\Op^\Ass_\infty, \Op^\BM_{\infty}$
denoted by $(-)^\rev.$
	
\end{remark}

We recall the definition of the generalized $\BM$-operad $\BM_{\X,\Y} \to \BM$ 
of \cite[Notation 4.16.]{HEINE2023108941} associated to spaces $\X,\Y$.

\begin{notation}\label{niu}
The forgetful functor $$\alpha: \Delta \to \Set, \ [\n] \mapsto \{0,1,..., \n \} $$
gives rise to a functor $ \rho: \Delta_{/[1]} \to \Delta \times \Delta,$
where the last functor takes the fiber over 0 on the first factor and the fiber over 1 on the second factor.
\end{notation}




\begin{definition}\label{enri}
Let $\X,\Y$ be a small spaces.

\begin{itemize}
\item 
Let $\Ass_\X \to \Ass$ be the left fibration classifying the functor 	
$$ \lambda_\X: \Ass \xrightarrow{\tau}\Ass=\Delta^\op \xrightarrow{\alpha^\op} \Set^\op \subset \mS^\op \xrightarrow{ \Fun(-, \X)} \mS.$$
	
\item Let $\BM_{\X,\Y} \to \BM$ be the left fibration classifying the functor
$$ \theta_{\X,\Y}: \BM \xrightarrow{\rho^\op} \Ass\times\Ass 
\xrightarrow{\lambda_\X\times \lambda_\Y} \mS.$$


\item Let $\BM_{\X}:=\BM_{\X,*} \to \BM$. 
\end{itemize}
\end{definition}

\begin{remark}
The functor $\lambda_\X$ is the restriction of $\theta_{\X,\Y}$ along the embedding $\Ass \subset \LM \subset \BM$ and $\lambda_\Y$ is the restriction of $\theta_{\X,\Y}$ along the embedding $\Ass \subset \RM \subset \BM.$
Therefore $\Ass_{\X}\to \Ass $ is the pullback of $\BM_{\X,\Y}\to \BM$ along the embedding $\Ass \subset \LM \subset \BM$ and $\Ass_{\Y}\to \Ass $ is the pullback of $\BM_{\X,\Y}\to \BM$ along the embedding $\Ass \subset \RM \subset \BM.$ 
	
\end{remark}

\begin{remark}
	
The functor 
$\BM_{\X,\Y} \to \BM$ is a cocartesian fibration of generalized $\BM$-operads \cite[Remark 4.17.]{HEINE2023108941}.
This implies that the pullback $\Ass_\X \to \Ass$ is a cocartesian fibration of generalized $\Ass$-operads.
\end{remark}

\begin{remark}
Gepner-Haugseng \cite[Definition 4.1.1.]{article} define a generalized $\Ass$-operad
$\Delta^\op_\X \to \Delta^\op=\Ass$, which is the pullback of $\Ass_\X \to \Ass$ of Definition \ref{enri} along the equivalence $\tau:\Ass \simeq \Ass$ of Remark \ref{aaa}.
We define $\Ass_\X$ our way and not as $\Delta^\op_\X$ for the following reason:
a $\Delta_\X^\op$-algebra in any monoidal $\infty$-category $\mV \to \Ass$
consists of a map $\mC: \X\times \X\simeq (\Delta^\op_\X)_{[1]}\to \mV_{[1]}$
and a coherently associative composition map $\mu: \ot_\mV \circ (\mC \times_\X \mC) \to \mC \circ \q$ of functors $\X\times\X \times\X \to \mV_{[1]},$
where $\ot_\mV: \mV_{[2]}\simeq \mV_{[1]}\times \mV_{[1]} \to \mV_{[1]}$ is the tensor product and $ \q: \X\times\X \times\X 
\simeq (\Delta^\op_\X)_{[2]} \to (\Delta^\op_\X)_{[1]}\simeq \X \times \X$ is the projection to the first and third factor.
So $\mu$ provides for any $\A,\B,\C \in \X$ a composition map
$\mC(\A,\B)\ot \mC(\B,\C) \to \mC(\A,\C).$
On the other hand the $\Ass$-operad $\Ass_\X \to \Ass$ provides for any $\A,\B,\C \in \X$ a composition map
$\mC(\B,\C)\ot \mC(\A,\B) \to \mC(\A,\C).$
Choosing the composition this way is in accordance to Hinich's work \cite{HINICH2020107129} and our work \cite{HEINE2023108941}.
	
\end{remark}

\begin{remark}\label{bef}
Let $\tau$ be the involution on $\Ass$ of Remark \ref{aaa}. 
There is a canonical equivalence of functors
$ \alpha \circ \tau^\op \simeq \alpha$ whose component at any $[\n]\in \Delta$
is the order reversing bijection $\{0,...,\n\}\simeq \{0,...,\n\}, \bi \mapsto \n-\bi$.
Thus $\lambda_\X \simeq \lambda_\X \circ \tau$ giving an equivalence
$\Ass_\X \simeq (\Ass_\X)^\rev$ of generalized $\Ass$-operads.
\end{remark}

\begin{remark}\label{oo}
Let $\bar{\tau}$ be the involution on $\BM$ of Remark \ref{aaa}.
The composition $\rho^\op \circ \bar{\tau}: \BM \to \Ass \times \Ass$
factors as $\rho^\op$ followed by the auto-equivalence of $\Ass \times \Ass$ switching the factors and applying $\tau$ on both components.
Therefore the composition $\theta_{\X,\Y} \circ \bar{\tau}: \BM \to \mS$ is equivalent to $\theta_{\Y,\X}$ using Remark \ref{bef} giving an equivalence
$\BM_{\X,\Y} \simeq (\BM_{\Y,\X})^\rev$ of generalized $\BM$-operads.
The pullback of this equivalence along the left embedding to $\Ass$
gives the equivalence
$\Ass_\X \simeq (\Ass_\X)^\rev$, the pullback along the right embedding to $\Ass$ gives the equivalence
$\Ass_\Y \simeq (\Ass_\Y)^\rev$, the fiber of this equivalence over $\mathfrak{m}$
is the equivalence $\X \times \Y \simeq \Y \times \X$ switching the factors.

\end{remark}

\begin{remark}\label{remo}
Let $\X,\X',\Y, \Y'$ be small spaces.
The definition implies that the canonical map of generalized $\Ass$-operads 
$\Ass_{\X\times\X'} \to \Ass_{\X}\times_\Ass \Ass_{\X'}$
is an equivalence (since it is fiberwise).
For the same reason the canonical map of generalized $\BM$-operads 
$\BM_{\X\times\X',\Y\times\Y'} \to \BM_{\X,\Y}\times_\BM \BM_{\X',\Y'}$
is an equivalence.
In particular, invoking Remark \ref{oo} there is the following canonical equivalence of generalized $\BM$-operads:
$$ \BM_{\X,\Y} \simeq \BM_{\X,*}\times_\BM \BM_{*,\Y}\simeq \BM_{\X,*}\times_\BM (\BM_{\Y,*})^\rev.$$
	
	
\end{remark}

\section{Hinich's $\mathfrak{BM}_{\X,\Y}$}

We recall the definition of Hinich's $\BM$-operad $\mathfrak{BM}_\X \to \BM$
of \cite[3.2.]{HINICH2020107129} for any small $\infty$-category $\X$ and 
extend Hinich's definiton to a $\BM$-operad $ \mathfrak{BM}_{\X,\Y} \to \BM$
for any small $\infty$-categories $\X,\Y.$
Hinich uses that the restricted Yoneda-embedding $$\Cat_{\infty /\BM} \subset \mP(\Cat_{\infty /\BM}) \to \mP(\Delta_{/\BM}) $$ is fully faithful
and constructs a functor \begin{equation}\label{lan}
\mathfrak{BM}_{(-)}: \Cat_\infty \to \mP(\Delta_{/\BM})\end{equation}
that lands in the subcategory 
$\Op_{\infty / \BM} \subset \Cat_{\infty /\BM} \hookrightarrow \mP(\Delta_{/\BM})$ \cite[Lemma 3.3.1, Proposition 3.3.3]{HINICH2020107129}.

We extend Hinich's construction in the following way:
\begin{definition}
Let $\X,\Y$ be small $\infty$-categories.
We set $$ \mathfrak{BM}_{\X,\Y} := \mathfrak{BM}_{\X}\times_\BM \mathfrak{BM}_{\Y}^\rev, $$
$$ \mathfrak{LM}_{\X} := \LM \times_\BM \mathfrak{BM}_{\X},$$
$$ \mathfrak{Ass}_{\X} := \Ass \times_\LM \mathfrak{LM}_{\X}.$$
\end{definition}
The definition implies that $ \mathfrak{BM}_{\X,*} = \mathfrak{BM}_{\X}$ and $\mathfrak{BM}_{*,\X} =\mathfrak{BM}_{\X}^\rev.$

\begin{remark}

Stefanich \cite[Remark 2.2.10.]{stefanich2020} gives a different construction of 
$\mathfrak{BM}_{\X,\Y}$ in the language of internal $\BM$-cooperads.
	
\end{remark}

To define the functor (\ref{lan}) Hinich \cite[3.2.]{HINICH2020107129} constructs a functor $\F: \Delta_{/ \BM} \to \Cat_\infty$ that by Yoneda-extension \cite[Theorem 5.1.5.6.]{lurie.HTT} gives rise to an adjunction \begin{equation}\label{lant}
\F_!: \mP(\Delta_{/ \BM}) \to \Cat_\infty: \mathfrak{BM}_{(-)}:=\F^*.\end{equation}

The functor $\F: \Delta_{/ \BM} \to \Cat_\infty$ sends any colimit decomposition 
\begin{equation}\label{colo}
[\n] \simeq \{0 < 1 \} \coprod_{\{1\}} \{ 1 < 2 \} \coprod_{\{2\}} ... \coprod_{\{\n-1\}  } \{ \n-1 < \n \} \end{equation} in $\Delta_{/\BM}$
to a colimit, sends a functor of the form $[0] \to \BM$ to a set, a functor of the form $[1] \to \BM$ to a finite (possibly empty) coproduct of copies of $[0]$ and $[1]$, and an arbitrary functor $[\n] \to \BM$ to a finite (possibly empty) coproduct of copies of objects of $\Delta.$

Let $\L: \Cat_\infty \rightleftarrows \mS$ be the canonical adjunction,
where the right adjoint is the canonical embedding and the left adjoint formally inverts the morphisms.
Composing the latter adjunction with adjunction (\ref{lant}) gives an adjunction
\begin{equation}\label{lanta}
\mF_!: \mP(\Delta_{/ \BM}) \to \mS: \mathfrak{BM}_{(-)} \simeq \mF^*,\end{equation}
where $\mF$ is the composition $ \Delta_{/\BM} \xrightarrow{\F} \Cat_\infty \xrightarrow{\L} \mS.$ 
The functor $\mF$ lands in the category $\Fin$ of finite sets because the essential image of $\F$ consists of finite (possibly empty) coproducts of copies of objects of $\Delta$ and $\L([\n])$ is contractible for any $[\n]\in \Delta$. 

Although it is some work to describe $\F: \Delta_{/ \BM} \to \Cat_\infty$ combinatorially \cite[3.2.5.]{HINICH2020107129}, it is much easier to describe the functor $\mF$, which we do in the following:
Since $\mF$ preserves any colimit decomposition (\ref{colo}), it will be enough for us to describe the images under $\mF$ of $[0]$ and $[1]$.

\begin{example}
The case $\n=0.$

In this case $\alpha$ corresponds to an object of $\BM$ given by a map $\varphi: [\bk] \to [1]$ in $\Delta$ and we write $\mF(\varphi)$ for $\mF(\alpha).$

If $\bk = 0 $, we have $\mF(\varphi) = \emptyset $. If $\bk = 1 $, we have $\mF(\varphi) = \begin{cases} \ast \coprod \ast, \ \varphi=0 \\ \emptyset, \ \varphi=1, \\ \ast, \ \varphi=\id.  \end{cases}$

In general we write 
$$ [\bk] \simeq \{0 < 1 \} \coprod_{\{1\}} \{ 1 < 2 \} \coprod_{\{2\}} ... \coprod_{\{\bk-1\}  } \{ \bk-1 < \bk \} $$ in $\Delta_{/[1]} $
and have 
$$ \mF(\varphi) \simeq  \mF(\{0 < 1 \}) \coprod  \mF(\{ 1 < 2 \}) \coprod ... \coprod  \mF(\{ \bk-1 < \bk \}).$$

\end{example}

To treat the case $\n=1$ it is convenient to fix the following notation:

\begin{notation}
Let $\varphi: [\bk] \to [1]$ be a map in $\Delta$. The fiber $\varphi^{-1}(\{0\}) \subset [\bk]$ of $\varphi$ over $0$ is of the form $[\ell]$ for some $-1 \leq \ell \leq \bk.$
Let $0 \leq \beta \leq 1$ be the cardinality of the set of all $1 \leq \bi  \leq \bk $ with $\varphi(\bi-1) < \varphi(\bi)$.
Then $\mF(\varphi)$ has $2\ell+\beta$ elements, which we denote by
$$ \emph{ } \hspace{11mm}  \{ \x_1^1, \x_1^2, ..., \x^1_{\ell}, \x^2_{\ell} \}, \hspace{15mm} (\beta = 0) $$
$$ \hspace{11mm} \{ \x_1^1, \x_1^2, ..., \x^1_{\ell}, \x^2_{\ell}, \x_{\ell+1} \} , \hspace{8mm} (\beta = 1)$$
where the set $ \{ \x_1^1, \x_1^2, ..., \x^1_{\ell}, \x^2_{\ell} \} $ is empty by convention if $\ell < 1.$
\end{notation}

\begin{example}\label{Exxx} The case $\n=1.$ 

In this case $\alpha$ corresponds to a morphism $\varphi \to \varphi'$ of $\BM$,
which is given by a map $\phi: [\bk'] \to [\bk] $ over $ [1]$ in $\Delta$. 
The fiber $\varphi^{-1}(\{0\}) \subset [\bk]$ is of the form $[\ell]$ for some $-1 \leq \ell \leq \bk$ and the fiber $\varphi'^{-1}(\{0\}) \subset [\bk']$ is of the form $[\ell']$ for $-1 \leq \ell' \leq \bk'.$
Let $0 \leq \beta \leq 1$ be the cardinality of the set of all $1 \leq \bi  \leq \bk $ with $\varphi(\bi-1) < \varphi(\bi)$ and $0 \leq \beta' \leq 1$ the cardinality of the set of all $1 \leq \bi  \leq \bk' $ with $\varphi'(\bi-1) < \varphi'(\bi)$.
Let $\mF(\varphi)= \{ \x_1^1, \x_1^2, ..., \x^1_{\ell}, \x^2_{\ell}, \x_{\ell+1} \} $ and $\mF(\varphi')= \{ \y_1^1, \y_1^2, ..., \y^1_{\ell'}, \y^2_{\ell'}, \y_{\ell'+1} \}$.

The set $\mF(\alpha)$ is the pushout of sets $$(\mF(\varphi){\coprod} \mF(\varphi')) \coprod_{\mJ \times \{0,1\} } \mJ, $$ where the map 
$\lambda^\alpha: \mJ \times \{0,1\} \to \mF(\varphi) {\coprod} \mF(\varphi')$ is
the map corresponding to the inclusion $\mJ \subset (\mF(\varphi) {\coprod} \mF(\varphi'))^{\times2}$ of the following set $\mJ:$
For $ 1 \leq \bi \leq \ell'$ let $\mJ_\bi \subset (\mF(\varphi) {\coprod} \mF(\varphi'))^{\times2}$ be the following set:
\begin{itemize}
\item If $\phi(\bi-1)= \phi(\bi), $ we have $\mJ_\bi:=\{(\y^1_\bi, \y^2_\bi)\}.$

\item If $\phi(\bi-1) < \phi(\bi), $ we have $$\mJ_\bi:= \{(\y^1_\bi, \x^1_{\phi(\bi)}), (\x^2_{\phi(\bi-1)+1}, \y^2_{\bi}), (\x^2_\bj, \x^1_{\bj-1}) \mid \phi(\bi-1)+1 < \bj \leq \phi(\bi)\}$$

\end{itemize}

If $\beta=0$, we set $\mJ:= \coprod_{1 \leq \bi \leq \ell'}\mJ_\bi.$

If $\beta'=1 $, in which case also $\beta=1, $ we set
$$\mJ:= \coprod_{1 \leq \bi \leq \ell'}\mJ_\bi \coprod \{ (\x_{\ell+1}, \x^1_{\ell}), (\x^2_{\phi(\ell')+1}, \y_{\ell'+1}), (\x^2_\bj, \x^1_{\bj-1}) \mid \phi(\ell')+2 \leq \bj \leq \ell \}.$$ 

Note that $\mJ$ has $\ell'+\phi(\ell'+\beta')-\phi(0) $ elements.
\end{example}


In particular, we obtain the following proposition:

\begin{proposition}\label{schm}

Let $\X, \Y $ be small spaces.
\begin{enumerate}
\item For every $\n \geq 0$ and $\x_1,...,\x_\n,\y \in \X \times \X$
there is a canonical equivalence
$$\Mul_{\mathfrak{Ass}_\X}(\x_1,...,\x_\n;\y) \simeq \X(\y^1, \x^1_\n) \times \X(\x^2_1,\y^2)
\times \prod_{\bj=2}^\n \X(\x^2_\bj,\x^1_{\bj-1}), $$
\item For every $\n,\m \geq 0$ and $\x_1,...,\x_\n \in \X \times \X, \z,\w \in \X \times \Y, \y_1,...,\y_\m \in \Y \times \Y$
there is a canonical equivalence
$$ \Mul_{\mathfrak{BM}_{\X,\Y}}(\x_1,...,\x_\n,\z,\y_1,...,\y_\m; \w) \simeq $$
$$ \X(\z^1,\x_\n^1) \times \X(\x^2_1,\w^1) \times \prod_{\bj=2}^\n \X(\x^2_\bj, \x^1_{\bj-1}) \times \Y(\z^2,\y_\n^1) \times \Y(\y^2_1,\w^2) \times \prod_{\bi=2}^\m \Y(\y^2_\bi, \y^1_{\bi-1}).$$

\end{enumerate}
	
\end{proposition}

\begin{proof}

1: Let $\alpha: \varphi \to \varphi'$ be the morphism of $\Ass \subset \BM$ corresponding to the unique order preserving map $[1]\to [\n]$ over $[1]$ preserving the minimum and maximum. 
By Example \ref{Exxx} we have $\mF(\varphi)= \{ \x_1^1, \x_1^2, ..., \x^1_{\ell}, \x^2_{\ell} \} $ and $\mF(\varphi')= \{\y^1, \y^2\}$.
The set $\mF(\alpha)$ is the pushout of sets $(\mF(\varphi){\coprod} \mF(\varphi')) \coprod_{\mJ \times \{0,1\} } \mJ,$ where the map 
$\lambda^\alpha: \mJ \times \{0,1\} \to \mF(\varphi) {\coprod} \mF(\varphi')$ is
the map corresponding to the inclusion $\mJ \subset (\mF(\varphi) {\coprod} \mF(\varphi'))^{\times2}$ of the set $$ \mJ:= \{(\y^1, \x^1_{\n}), (\x^2_{1}, \y^2), (\x^2_\bj, \x^1_{\bj-1}) \mid 1 < \bj \leq \n\}.$$
We obtain a canonical equivalence
$$\Mul_{\mathfrak{Ass}_\X}(\x_1,...,\x_\n;\y)\simeq \Mul_{\mathfrak{BM}_\X}(\x_1,...,\x_\n;\y) 
\simeq$$$$ \{((\x_1,...,\x_\n),\y)\}\times_{(\mathfrak{BM}_\X)_{[\n]} \times (\mathfrak{BM}_\X)_{[1]}} \Fun_{\BM}([1], \mathfrak{BM}_\X) $$$$\simeq \{((\x_1,...,\x_\n),\y) \} \times_{(\mS(\mF(\varphi),\X) \times \mS(\mF(\varphi'),\X))} \mS(\mF(\alpha),\X) \simeq $$
$$\{(\y^1, \x^1_{\n}), (\x^2_{1}, \y^2), (\x^2_\bj, \x^1_{\bj-1}) \mid 1 < \bj \leq \n\} \times_{(\prod_{\Z \in \mJ}\X^{\times2})} \prod_{\Z \in \mJ} \X.$$
Hence statement 1. follows since the fiber of the diagonal map $\X \to \X \times\X$
over any pair $(\A,\B)\in \X \times \X$ is the mapping space $\X(\A,\B)$.

2. Let $\alpha: \varphi \to \varphi'$ be the morphism of $\BM$ corresponding to the unique order preserving map $[1]\to [\n]$ over $[1]$ preserving the minimum and maximum, where $[1]$ lies over $[1]$ via the identity.
By Example \ref{Exxx} we have $\mF(\varphi)= \{ \x_1^1, \x_1^2, ..., \x^1_{\n}, \x^2_{\n}, \x \} $ and $\mF(\varphi')= \{\y\}$.
The set $\mF(\alpha)$ is the pushout of sets $(\mF(\varphi){\coprod} \mF(\varphi')) \coprod_{\mJ \times \{0,1\} } \mJ,$ where the map 
$\lambda^\alpha: \mJ \times \{0,1\} \to \mF(\varphi) {\coprod} \mF(\varphi')$ is
the map corresponding to the inclusion $\mJ \subset (\mF(\varphi) {\coprod} \mF(\varphi'))^{\times2}$ of the set 
$$\mJ:= \{ (\x, \x^1_{\n}), (\x^2_{1}, \y), (\x^2_\bj, \x^1_{\bj-1}) \mid 1 < \bj \leq \n \}.$$ 

We obtain a canonical equivalence
$$ \Mul_{\mathfrak{BM}_{\X,\Y}}(\x_1,...,\x_\n,\x,*,...,*; \y) 
\simeq$$$$ \{((\x_1,...,\x_\n,\x,*,...,*),\y)\}\times_{(\mathfrak{BM}_\X)_{[\n]} \times (\mathfrak{BM}_\X)_{[1]}} \Fun_{\BM}([1], \mathfrak{BM}_\X) $$$$\simeq \{((\x_1,...,\x_\n,\x,*,...,*),\y) \} \times_{(\mS(\mF(\varphi),\X) \times \mS(\mF(\varphi'),\X))} \mS(\mF(\alpha),\X) \simeq $$
$$\{ (\x, \x^1_{\n}), (\x^2_{1}, \y), (\x^2_\bj, \x^1_{\bj-1}) \mid 1 < \bj \leq \n \} \times_{(\prod_{\Z \in \mJ}\X^{\times2})} \prod_{\Z \in \mJ} \simeq$$$$ \X(\x,\x_\n^1) \times \X(\x^2_1,\y) \times \prod_{\bj=2}^\n \X(\x^2_\bj, \x^1_{\bj-1}),$$ 
where the last equivalence follows like in the proof of 1.
Since $\mathfrak{BM}_{\X,\Y} $ is by definition the pullback $ \mathfrak{BM}_\X\times_\BM (\mathfrak{BM}_\Y)^\rev$, we obtain an equivalence
$$ \Mul_{\mathfrak{BM}_{\X,\Y}}(\x_1,...,\x_\n,\z,\y_1,...,\y_\m; \w) \simeq $$$$ \Mul_{\mathfrak{BM}_{\X}}(\x_1,...,\x_\n,\z^1,*,...,*; \w^1) \times  \Mul_{\mathfrak{BM}_{\Y}}(\y_1,...,\y_\m,\z^2,*,...,*; \w^2) \simeq $$
$$ \X(\z^1,\x_\n^1) \times \X(\x^2_1,\w^1) \times \prod_{\bj=2}^\n \X(\x^2_\bj, \x^1_{\bj-1}) \times \Y(\z^2,\y_\n^1) \times \Y(\y^2_1,\w^2) \times \prod_{\bi=2}^\m \Y(\y^2_\bi, \y^1_{\bi-1}).$$

\end{proof}

\section{The comparison}

In this section we prove our main result (Theorem \ref{theo}).
To state Theorem \ref{theo} we first need to construct for every small spaces $\X,\Y$ a comparison map of generalized $\BM$-operads $\BM_{\X,\Y} \to \mathfrak{BM}_{\X,\Y}$.
This will be the content of the next proposition:

\begin{proposition}\label{Prop}
	
There is a canonical natural transformation 
$$ \xi: \BM_{(-,-)} \to \mathfrak{BM}_{(-,-)}$$ of functors $\mS \times \mS \to \Op_{\infty /\BM}^\mathrm{gen}.$

\end{proposition}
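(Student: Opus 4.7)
My plan is to reduce the construction of $\xi$ to that of a natural transformation $\mu : \mF \Rightarrow \rho \circ \mathrm{ev}_0$ of covariant functors $\Delta_{/\BM} \to \mathrm{Fin}$, and then to define $\mu$ explicitly via the combinatorics of Hinich's formulas. Under the restricted Yoneda embedding $\Op_{\infty/\BM}^{\gen} \hookrightarrow \Cat_{\infty/\BM} \hookrightarrow \mP(\Delta_{/\BM})$, the presheaf associated with $\BM_\X \to \BM$ (a left fibration classifying $\theta : \BM \to \mS,\ \varphi \mapsto \X^{\rho(\varphi)}$) sends $\alpha : [\n] \to \BM$ to $\theta(\alpha(0)) = \X^{\rho(\alpha(0))}$, since the space of sections of a left fibration over $[\n]$ is determined by the fibre at the initial vertex; the presheaf of $\mB\mM_\X = \mF^*(\X)$ sends $\alpha$ to $\X^{\mF(\alpha)}$. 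A natural transformation between these, functorial in $\X$, is therefore equivalent (by the contravariance of $\X^{(-)}$) to a natural transformation $\mu : \mF \Rightarrow \rho \circ \mathrm{ev}_0$ of covariant $\mathrm{Fin}$-valued functors on $\Delta_{/\BM}$, where $\mathrm{ev}_0 : \Delta_{/\BM} \to \BM^{\op}$ is the initial-vertex restriction (covariant since $0 \leq g(0)$ for any $g : [\n] \to [\n']$ in $\Delta$ produces the required morphism in $\BM$).

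I would define $\mu$ first on $[0]$-simplices: for $\varphi : [\bk] \to [1]$ with $\varphi^{-1}(0) = [\l]$, set
\[
\mu_\varphi(\x_i^1) := i, \qquad \mu_\varphi(\x_i^2) := i-1, \qquad \mu_\varphi(\x_{\l+1}) := \l.
\]
On a $[1]$-simplex $\sigma : \varphi \to \varphi'$ corresponding to $\phi : [\bk'] \to [\bk]$ over $[1]$, declare $\mu_\sigma$ on the pushout $\mF(\sigma) = (\mF(\varphi) \sqcup \mF(\varphi'))/\sim$ to agree with $\mu_\varphi$ on the $\mF(\varphi)$-factor and with $\phi|_{[\l']} \circ \mu_{\varphi'}$ on the $\mF(\varphi')$-factor. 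The key check is that this descends to the quotient under each of Hinich's pairing rules: the rule $\x_j^2 \sim \x_{j-1}^1$ gives $j-1 = j-1$; the rule $\y_i^1 \sim \y_i^2$ in the collapsed case $\phi(i-1) = \phi(i)$ gives $\phi(i) = \phi(i-1)$; the rules $\y_i^1 \sim \x_{\phi(i)}^1$ and $\x_{\phi(i-1)+1}^2 \sim \y_i^2$ give $\phi(i) = \phi(i)$ and $\phi(i-1) = \phi(i-1)$; and the three additional pairs in the $\beta'=1$ case are verified analogously. Naturality of $\mu$ for the vertex inclusions $g_0, g_1 : [0] \hookrightarrow [1]$ then holds by construction.

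For a general $\alpha : [\n] \to \BM$ with $\n \geq 2$, I would use that $\mF$ preserves the Segal decomposition $[\n] \simeq \{0<1\} \sqcup_{\{1\}} \cdots \sqcup_{\{\n-1\}} \{\n-1 < \n\}$ to define $\mu_\alpha$ by the universal property of the colimit: on each edge $\mF(\alpha|_{\{i-1<i\}})$, take $\mu_{\alpha|_{\{i-1<i\}}}$ post-composed with the map $\rho(\alpha(i-1)) \to \rho(\alpha(0))$ induced by $\rho$ applied to the chain $\alpha(0) \to \cdots \to \alpha(i-1)$ in $\BM$; compatibility at each intermediate vertex $\alpha(i)$ reduces, by functoriality of $\rho$, to the $[1]$-level naturality already verified. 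Naturality of the resulting $\mu$ with respect to arbitrary morphisms in $\Delta_{/\BM}$ then follows because such morphisms are generated by face and degeneracy maps (which either preserve $\mathrm{ev}_0$ or, in the case of $\partial_0$, factor through the Segal extension just described) together with morphisms in $\BM$, both of which are handled by the $[0]$- and $[1]$-level checks. Applying $\Map_\mS(-,\X)$ to $\mu$ produces the desired natural transformation $\xi_\X : \BM_\X \to \mB\mM_\X$, manifestly functorial in $\X$ and lying over $\BM$, hence a morphism in $\Op_{\infty/\BM}^{\gen}$. The main obstacle is the combinatorial verification in the second paragraph: the formula for $\mu_\varphi$ is slightly counterintuitive (the naive swap $\x_i^1 \mapsto i-1$, $\x_i^2 \mapsto i$ fails the identification $\x_j^2 \sim \x_{j-1}^1$), so the crux of the proof is identifying the correct assignment and checking it case-by-case against each of Hinich's rules.
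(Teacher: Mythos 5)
Your proposal is correct and is essentially the paper's own argument: both reduce, via the restricted Yoneda embedding and the adjunction defining $\B\mM_{(-)}$, to constructing a natural transformation from $\mF$ to a finite-set-valued functor on $\Delta_{/\BM}$ determined by its components on $0$- and $1$-simplices, and your explicit assignment $\x_i^1 \mapsto i$, $\x_i^2 \mapsto i-1$, $\x_{\ell+1} \mapsto \ell$ with the descent checks against Hinich's pairing rules is exactly the map $\gamma$ of Proposition \ref{hzt}. The only cosmetic difference is that you identify the presheaf of $\BM_\X$ directly as $\alpha \mapsto \X^{\rho(\alpha(0))}$ (sections of a left fibration over a simplex are the fiber at the initial vertex), whereas the paper arrives at the same functor in the guise of $\mG = ((-)\times_\BM \mW)^\sim$ via Lemma \ref{lobo} and \cite{gepner2020lax}.
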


The canonical natural transformations
$$\BM_{(-,-)} \to \BM_{(-,*)}\times_\BM \BM_{(*,-)} \simeq \BM_{(-,*)}\times_\BM (\BM_{(-,*)})^\rev, $$
$$\mathfrak{BM}_{(-,-)} \to \mathfrak{BM}_{(-,*)}\times_\BM \mathfrak{BM}_{(*,-)} \simeq \mathfrak{BM}_{(-,*)}\times_\BM (\mathfrak{BM}_{(-,*)})^\rev $$
of functors $\mS \times\mS \to \Op_{\infty /\BM}^\mathrm{gen}$ are equivalences.
The first map is an equivalence by Remark \ref{remo}, the second map is tautologically an equivalence.
We construct a canonical natural transformation 
$$ \zeta: \BM_{(-,*)} \to \mathfrak{BM}_{(-,*)}$$ of functors $\mS \to \Op_{\infty /\BM}^\mathrm{gen}$
and define $\xi$ as the induced natural transformation 
$$ \BM_{(-,-)} \simeq\BM_{(-,*)}\times_\BM (\BM_{(-,*)})^\rev \xrightarrow{\zeta \times_\BM \zeta^\rev}$$$$ \mathfrak{BM}_{(-,*)}\times_\BM (\mathfrak{BM}_{(-,*)})^\rev \simeq \mathfrak{BM}_{(-,-)}.$$

 
Since the restricted Yoneda-embedding $$\Cat_{\infty /\B \M} \subset \mP(\Cat_{\infty /\B \M} ) \to \mP(\Delta_{/\B \M} )$$ is fully faithful, to construct $\zeta $ as a natural transformation of functors $\mS \to \Cat_{\infty /\BM},$ 
it is enough to construct a map 
$$ \Cat_{\infty /\BM}(\alpha, \BM_{\X}) \to  \Cat_{\infty /\BM}(\alpha, \mathfrak{BM}_{\X}) $$ natural in $\alpha \in \Delta_{/\BM}$ and $\X\in \mS$.
Note that $\zeta$ is automatically a natural transformation $\mS \to \Op_{\infty /\BM}^\mathrm{gen} \subset \Cat_{\infty /\BM}$
since for every small space $\X$ the functor
$\zeta_\X: \BM_{\X,*} \to \mathfrak{BM}_{\X,*}$ over $\BM$ and so also the functor
$\xi_{\X,\Y}: \BM_{\X,\Y} \to \mathfrak{BM}_{\X,\Y}$ over $\BM$ are maps of  generalized $\BM$-operads. 
This holds since $\mathfrak{BM}_{\X,\Y} \to \BM$ is a $\BM$-operad whose fibers are spaces so that all lifts in $\BM_{\X,\Y}$ of morphisms of $\BM$
that admit a cocartesian lift in $\mathfrak{BM}_{\X,\Y}$ are preserved.

By adjunction (\ref{lanta}) there is a canonical equivalence 
\begin{equation}\label{essss}
\Cat_{\infty /\BM}(\alpha, \mathfrak{BM}_{\X}) \simeq \mS(\mF(\alpha), \X)
\end{equation} 
natural in $\alpha \in \Delta_{/\BM}$ and $\X \in \mS.$

\begin{notation}
Let $\mW \to \BM= (\Delta_{/[1]})^\op $ be the right fibration classifying the functor $ \sigma: \Delta_{/[1]} \to \Set$
taking the fiber over 0. 
\end{notation}

\begin{lemma}\label{lobo}
There is a canonical equivalence $$ \Cat_{\infty /\BM}(\alpha, \B \M_{\X}) \simeq \mS(\L(\alpha \times_\BM \mW), \X) $$
natural in $\alpha \in \Delta_{/\BM}$ and $\X \in \mS$,
where $\L$ is the left adjoint of the embedding $\mS \subset \Cat_\infty.$	
	
\end{lemma}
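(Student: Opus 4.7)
The plan is to unwind the left-hand side via the straightening of the left fibration $\BM_\X \to \BM$ and recognize the resulting formula as the right-hand side, using the adjunction $\lim \mS(-, \X) \simeq \mS(\colim -, \X)$ together with the identification of the pullback $\alpha \times_\BM \mW$ with the Grothendieck construction of an appropriate classifying functor.

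First, recall that $\BM_\X \to \BM$ is the left fibration classifying $\theta = \Fun(-, \X) \circ \rho^{\op}: \BM \to \Cat_\infty$. Since $\rho$ takes values in sets and $\X \in \mS$ is a space, $\theta$ factors through $\mS \subset \Cat_\infty$ with $\theta(\varphi) \simeq \mS(\rho(\varphi), \X)$. For $\alpha: [\n] \to \BM$, I would identify $\Cat_{\infty/\BM}(\alpha, \BM_\X)$ with the space of sections of the pullback left fibration $[\n] \times_\BM \BM_\X \to [\n]$; by the standard identification of sections of a left fibration with the limit of its classifying diagram, this equals $\lim_{[\n]}(\theta \circ \alpha)$.

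Setting $H := \rho \circ \alpha^{\op}: [\n]^{\op} \to \Set$ (the contravariance reflecting $\rho: \BM^{\op} = \Delta_{/[1]} \to \Set$), we have $\theta \circ \alpha \simeq \mS(H, \X)$ pointwise. The standard adjunction then yields
$$\lim_{[\n]} \mS(H, \X) \simeq \mS\bigl(\colim_{[\n]^{\op}} H,\, \X\bigr).$$
To finish, note that $\mW \to \BM$ is by construction the right fibration classified by $\rho$, so the pullback $\alpha \times_\BM \mW \to [\n]$ is the right fibration classified by $H$; the groupoid completion of the total space of a right fibration is the colimit of its classifying functor in $\mS$ (a consequence of straightening/unstraightening for right fibrations, or equivalently the homotopy-colimit description of the Grothendieck construction), hence $\colim_{[\n]^{\op}} H \simeq (\alpha \times_\BM \mW)^{\sim}$.

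Each step is manifestly natural in $\alpha \in \Delta_{/\BM}$ and $\X \in \mS$, so the composite equivalence is as well. The argument is not technically deep; the only real care needed is in bookkeeping the variances between $\BM$ and $\Delta_{/[1]}$ and ensuring that the orientation of the pullback square matches the classifying functor for the right fibration $\mW$. Combined with the equivalence (\ref{essss}), the lemma will reduce the construction of the comparison map $\xi$ of Proposition \ref{Prop} to exhibiting a natural transformation $\mF(\alpha) \to (\alpha \times_\BM \mW)^{\sim}$ of functors $\Delta_{/\BM} \to \mS$.
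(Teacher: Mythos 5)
Your proof is correct, but it takes a different route from the paper. The paper's proof is a two-line argument: it cites \cite[Proposition 7.3.]{gepner2020lax}, which supplies a canonical equivalence $\Cat_{\infty/\BM}(\alpha, \BM_\X) \simeq \Cat_\infty(\alpha \times_\BM \mW, \X)$ natural in $\alpha \in \Delta_{/\BM}$ and in $\X \in \Cat_\infty$ (this is the universal property of the fibration classified by $\varphi \mapsto \Fun(\mW_\varphi,\X)$, i.e.\ of the pushforward along the cartesian fibration $\mW \to \BM$), and then concludes using that $\X$ is a space, so that $\Cat_\infty(\alpha \times_\BM \mW, \X) \simeq \mS((\alpha \times_\BM \mW)^\sim, \X)$ via the adjunction $(-)^\sim \dashv (\mS \subset \Cat_\infty)$. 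You instead reprove the special case of that cited result needed here, by straightening: maps over $\BM$ into the left fibration $\BM_\X$ are sections of the pullback, hence the limit of $\theta \circ \alpha \simeq \mS(H(-),\X)$ with $H = \rho \circ \alpha^{\op}$; the limit is converted into $\mS(\colim_{[\n]^{\op}} H, \X)$; and $\colim_{[\n]^{\op}} H$ is identified with $(\alpha \times_\BM \mW)^\sim$ since $\alpha \times_\BM \mW$ is the right fibration classified by $H$. Each of these identifications is standard and your variance bookkeeping is right, so the argument goes through; note also that it never uses that the domain of $\alpha$ is a simplex, so it proves the statement for an arbitrary object of $\Cat_{\infty/\BM}$ in place of $\alpha$. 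What the two approaches buy: yours is self-contained and elementary, at the price of having to assert (routinely, but it does need saying) that the straightening-based identifications are natural in $\alpha$ and $\X$; the paper's citation packages exactly that naturality, is shorter, and yields the stronger intermediate statement natural in $\X \in \Cat_\infty$ rather than only in spaces, with the passage to $(-)^\sim$ isolated as the single step where $\X$ being a space is used.
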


\begin{proof}
The left fibration $\B\M_{\X}\to \BM$ is classified by the functor
$$\BM \xrightarrow{\sigma^\op} \Set^\op \subset \mS^\op\xrightarrow{\Fun(-,\X)}\mS.$$	
So by \cite[Proposition 7.3.]{gepner2020lax} there is a canonical equivalence 
$$ \Cat_{\infty /\BM}(\alpha, \B \M_{\X}) \simeq \Cat_{\infty}(\alpha \times_\BM \mW , \X) $$
natural in $\alpha \in \Delta_{/\BM}$ and $\X \in \Cat_\infty$.	So the claim  follows.
	
\end{proof}

In view of equivalence (\ref{essss}) and Lemma \ref{lobo} we find that Proposition \ref{Prop} follows from the following proposition:

\begin{proposition}\label{hzt}
There is a canonical natural transformation $$\gamma: \mF \to \mG:= \L \circ ((-) \times_\BM \mW) $$ of functors $ \Delta_{/\BM} \to \mS.$
\end{proposition}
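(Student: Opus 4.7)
My plan is to obtain $\gamma$ via the Yoneda lemma in $\mS$, exploiting that both $\mF(\alpha)$ and $\mG(\alpha)$ represent the same $\mS$-valued functor of $\X \in \mS$, namely the one whose value is the presheaf $\B\mM_\X$ evaluated at $\alpha$.

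I chain three natural equivalences. First, by adjunction (\ref{lanta}), for $\X \in \mS$ and $\alpha \in \Delta_{/\BM}$ we have
\[
\B\mM_\X(\alpha) \;\simeq\; \mS(\mF(\alpha), \X),
\]
natural in $\alpha$ and $\X$, by the very definition of $\B\mM_\X = \mF^*(\X) \in \mP(\Delta_{/\BM})$. Second, Hinich shows that $\B\mM_\X$ lies in the essential image of the restricted Yoneda embedding $\Op_{\infty/\BM} \hookrightarrow \mP(\Delta_{/\BM})$, so
\[
\B\mM_\X(\alpha) \;\simeq\; \Cat_{\infty/\BM}(\alpha, \B\mM_\X).
\]
Third, Lemma \ref{lobo} identifies the right-hand side with $\mS(\mG(\alpha), \X)$. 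Composing, and using that all equivalences are natural in both variables, I obtain a natural equivalence of bifunctors
\[
\mS(\mF(\alpha), \X) \;\simeq\; \mS(\mG(\alpha), \X).
\]
The Yoneda lemma in $\mS$ (in the variable $\X$, with $\alpha$ as parameter) then extracts the desired natural equivalence $\gamma: \mF \xrightarrow{\simeq} \mG$ of functors $\Delta_{/\BM} \to \mS$.

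The only real content is the compatibility between the two descriptions of $\B\mM_\X$: as a presheaf via (\ref{lanta}) and as an $\infty$-category via Hinich's construction (\ref{lan}). This is precisely the assertion, cited from Hinich, that $\B\mM_{(-)}$ lands in $\Op_{\infty/\BM} \subset \mP(\Delta_{/\BM})$. Given that, the rest is formal. As a combinatorial sanity check, on an atom $\alpha: [0] \to \BM$ corresponding to $\varphi: [\bk] \to [1]$ the map $\gamma_\alpha$ should be the surjection from $\mF(\varphi)$ onto $\mG(\alpha) = \varphi^{-1}(\{0\}) = [\l]$ sending $\x_i^1 \mapsto i-1$, $\x_i^2 \mapsto i$, and (if $\beta = 1$) $\x_{\l+1} \mapsto \l$. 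Naturality on arrows $[1] \to \BM$ then reduces to compatibility with the explicit pushout recipe for $\mF$ on $[1] \to \BM$, and is automatic from the abstract construction because pullback over $\BM$ along $\mW$ and the space reflection $(-)^\sim$ both preserve the Segal-type colimit decompositions that govern $\mF$.
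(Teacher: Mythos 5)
Your chain breaks at the third step. Lemma \ref{lobo} does \emph{not} compute maps into Hinich's operad: the object denoted $\B\M_\X$ there is the generalized $\BM$-operad $\BM_\X$ of Section 2 (the left fibration classifying $\theta$), and its proof applies \cite[Proposition 7.3.]{gepner2020lax} to exactly that left fibration; the typography is dangerously close to $\B\mM_\X$, but the two objects are different. Your first two steps are fine -- they are precisely equivalence (\ref{essss}), $\Cat_{\infty/\BM}(\alpha,\B\mM_\X)\simeq \mS(\mF(\alpha),\X)$ -- while Lemma \ref{lobo} gives $\Cat_{\infty/\BM}(\alpha,\BM_\X)\simeq \mS(\mG(\alpha),\X)$. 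Since these are mapping spaces into two \emph{different} objects over $\BM$, no Yoneda argument identifies $\mF$ with $\mG$, and in fact your conclusion that $\gamma$ is an equivalence is false: for the object $\varphi\colon[0]\to[1]$ with $\varphi(0)=0$ one has $\mF(\varphi)=\emptyset$ while $\mG(\varphi)\simeq\ast$, and in general $\mF(\varphi)$ has $2\ell+\beta$ elements whereas $\mG(\varphi)$ has $\ell+1$. Had your argument worked, full faithfulness of the restricted Yoneda embedding would force $\BM_\X\simeq\B\mM_\X$ over $\BM$ naturally in $\X$, i.e.\ $\xi_\X$ would be an equivalence and Theorem \ref{theo} a formal triviality; this cannot be, since $\BM_\X$ is a generalized $\BM$-operad with fiber $\X$ over the constant map $[0]\to\{0\}\subset[1]$, while Hinich's $\B\mM_\X$ is an honest $\BM$-operad with contractible fiber there.

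The proposition asserts only a natural transformation, and it genuinely has a direction: $\gamma(\varphi)$ collapses $\x^2_\bj$ onto $\x^1_{\bj-1}$ and is far from invertible, so it cannot be produced by an abstract representability argument; it has to be built. The actual work is: show that $\mG$, like $\mF$, lands in finite sets and preserves the edgewise colimit decomposition of $[\n]$ (Lemma \ref{lemora}), so that the transformation is determined by components on cells $[0]\to\BM$ and $[1]\to\BM$; define these components explicitly; and check, for a morphism $\alpha$ of $\BM$ given by $\phi\colon[\bk']\to[\bk]$, that the composites out of $\mF(\varphi)$ and $\mF(\varphi')$ agree on every pair glued in the pushout presentation of $\mF(\alpha)$, using the identification $\y^1_\bi=\x^1_{\phi(\bi)}$ in $([1]\times_\BM\mW)^\sim$. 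None of this combinatorial content is supplied by your argument, and your closing claim that naturality on $[1]\to\BM$ is ``automatic from the abstract construction'' presupposes the identification of $\mF$ with $\mG$ that fails. (Minor point, moot given the above: your explicit formula on objects has the superscripts reversed relative to the paper's convention, which sends $\x^1_\bj\mapsto\bj$ and $\x^2_\bj\mapsto\bj-1$.)
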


To prove Proposition \ref{hzt} we use the following lemma:

\begin{lemma}\label{lemora}
The functor $\mG: \Delta_{/\BM} \to \mS$ preserves the colimit
decomposition $$ \{0 < 1 \} \coprod_{\{1\}} \{ 1 < 2 \} \coprod_{\{2\}} ... \coprod_{\{\n-1\}  } \{ \n-1 < \n \} \simeq [\n]$$
and induces a functor $ \Delta_{/\BM} \to \Fin.$

\end{lemma}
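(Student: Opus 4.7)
The plan is to extend $\mG$ to a functor $\widetilde{\mG} \colon \Cat_{\infty/\BM} \to \mS$ defined by the same formula $\alpha \mapsto (\alpha \times_\BM \mW)^\sim$, show this extension preserves colimits by abstract means, and then separately verify the finite-set assertion by direct computation on the generators $[0]$ and $[1]$ over $\BM$.

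For colimit-preservation, I would write $\widetilde{\mG}$ as the composite
\[\Cat_{\infty/\BM} \xrightarrow{(-) \times_\BM \mW} \Cat_\infty \xrightarrow{(-)^\sim} \mS.\]
The right factor $(-)^\sim$ is left adjoint to the inclusion $\mS \hookrightarrow \Cat_\infty$, hence preserves colimits. The left factor is colimit-preserving because $\mW \to \BM$ is a right fibration, and base change along a right fibration (being a flat, i.e.\ exponentiable, categorical fibration) admits a right adjoint. Since the decomposition $[\n] \simeq \{0<1\} \coprod_{\{1\}} \cdots \coprod_{\{\n-1\}} \{\n-1 < \n\}$ is a colimit in $\Cat_\infty$ and the forgetful $\Cat_{\infty/\BM} \to \Cat_\infty$ preserves colimits, the decomposition holds in $\Cat_{\infty/\BM}$ as well and is preserved by $\widetilde{\mG}$.

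To see that $\mG$ lands in $\Fin$, I would compute on the two generators. For $\alpha \colon [0] \to \BM$ picking out $\varphi \in \BM$, the pullback $\alpha \times_\BM \mW$ is the fiber of $\mW \to \BM$ over $\varphi$, namely the finite set $\rho(\varphi)$. For $\alpha \colon [1] \to \BM$ corresponding to $\phi \colon \varphi' \to \varphi$ in $\Delta_{/[1]}$, the pullback is the finite $1$-category with objects $\rho(\varphi) \sqcup \rho(\varphi')$ and a single non-identity arrow $\phi(y) \to y$ for each $y \in \rho(\varphi')$; its classifying space is a disjoint union of trees each rooted in $\rho(\varphi)$, hence homotopy-equivalent to $\rho(\varphi) \in \Fin$. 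The key structural observation is then that under $\mG$ the left-endpoint inclusion $\{i\} \hookrightarrow \{i < i+1\}$ induces the identity $\rho(\alpha_i) \to \rho(\alpha_i)$ on classifying spaces, while the right-endpoint inclusion $\{i\} \hookrightarrow \{i-1 < i\}$ induces $\phi_i$. Consequently, each step of the iterated pushout computing $\mG([\n])$ is a pushout along an identity leg, so remains in $\Fin$, and induction yields $\mG([\n]) \simeq \rho(\alpha_0)$.

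The main obstacle is the subtlety that colimits of finite discrete spaces in $\mS$ typically leave $\Fin$ — for instance $\ast \sqcup_{S^0} \ast \simeq S^1$ — so the abstract colimit-preservation result alone is not enough to conclude finiteness. The point is to exploit the specific structure of the decomposition: exactly one of the inclusions $\{i\} \hookrightarrow \{i, i+1\}$ and $\{i\} \hookrightarrow \{i-1, i\}$ induces an identity on $\mG$, and this is what keeps the iterated pushout inside $\Fin$.
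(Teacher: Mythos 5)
Your argument is correct, and its first half coincides with the paper's: the paper likewise factors $\mG$ through $(-)\times_\BM\mW$ followed by $(-)^\sim$, getting colimit preservation from the fact that pullback along the cartesian fibration $\mW\to\BM$ admits a right adjoint (you invoke flatness of right fibrations instead; same content). Where you genuinely differ is the finiteness step. The paper asserts that $\mG$ sends every $\alpha\colon[\n]\to\BM$ with $\n\leq 1$ to an empty or contractible space, treating the fibers of $\mW$ as the linear orders $\varphi^{-1}(\{0\})$ so that the total category over $[1]$ has an initial object, and then glues a finite set along empty-or-contractible pieces. You instead take the fibers to be the discrete sets $\rho(\varphi)$, compute $\mG$ of a vertex as a finite set with $\ell+1$ elements and $\mG$ of an edge as a disjoint union of trees equivalent to $\rho(\varphi)$, and observe that in the iterated pushout one leg of every gluing is an identity, so the colimit stays in $\Fin$ and collapses to $\rho(\alpha(0))$. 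Your reading is the one consistent with the rest of the paper: $\mW$ is defined as the right fibration classifying the set-valued $\rho$, the proof of Proposition \ref{hzt} explicitly uses that $\mG(\varphi)$ has $\ell+1$ elements, and Lemma \ref{lobo} together with Theorem \ref{theo} needs the fiber of $\BM_\X$ over $\mathfrak{a}$ to be $\X\times\X$, which contractible values on objects would destroy. So your observation that exactly one endpoint inclusion induces an identity --- the very point that rules out phenomena like $\ast\coprod_{S^0}\ast\simeq S^1$ --- is not overkill but the substantive content of the finiteness claim; equivalently, one can note that $\mG(\alpha)$ is the colimit over $[\n]^{\op}$ of the set-valued diagram classifying $\alpha\times_\BM\mW$, and $[\n]^{\op}$ has a terminal object, giving $\mG(\alpha)\simeq\rho(\alpha(0))$ directly. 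One small wording point: to know the decomposition of $[\n]$ is a colimit in $\Cat_{\infty/\BM}$ you should say that the projection $\Cat_{\infty/\BM}\to\Cat_\infty$ creates (not merely preserves) colimits; the paper makes the same implicit step.
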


\begin{proof}
	
The functor $$ 	(-) \times_\BM \mW: \Cat_{\infty / \BM} \to \Cat_{\infty / \mW}$$
preserves colimits since $\mW \to \BM$ is a cartesian fibration so that
the latter functor admits a right adjoint \cite[Example B.3.11.]{lurie.higheralgebra}.
Thus the functor $$ (-) \times_\BM \mW: \Cat_{\infty / \BM} \to \Cat_{\infty / \mW} \xrightarrow{\text{forget}} \Cat_\infty $$
preserves small colimits so that $\mG$ preserves the colimit
decomposition $$ \{0 < 1 \} \coprod_{\{1\}} \{ 1 < 2 \} \coprod_{\{2\}} ... \coprod_{\{\n-1\}  } \{ \n-1 < \n \} \simeq [\n].$$

Hence $\mG$ induces a functor $ \Delta_{/\BM} \to \Fin$ if
$\mG$ sends every functor $\alpha: [\n] \to \BM$ for $\n=0,1$ to the empty or contractible space. Indeed by induction if
$\M$ is a finite set and $\T,\K$ are empty or contractible, then
$\M \coprod_\T \K$ is a finite set.

If $\n=0$, then $\alpha$ is an object $\varphi: [\bk]\to [1]$ of $\BM$ and $\mG(\alpha)$ is $\L([0] \times_\BM \mW)$, which is the image under $\L$ 
of $\varphi^{-1}(\{0\}) \in \Delta^{\triangleleft}$ that is empty or contractible.

Let $\n=1$. If the fiber over 0 of the cartesian fibration $[1] \times_\BM \mW \to [1]$ is empty, then the fiber over 1 has also to be empty and so $[1] \times_\BM \mW $ is empty so that $\L([1] \times_\BM \mW)$ is empty.
If the the fiber over 0 of the cartesian fibration $[1] \times_\BM \mW \to [1]$ is non-empty, then it belongs to $\Delta$ and so admits an initial object.
This initial object lies over 0, the initial object of $[1]$, and therefore is an initial object of $[1] \times_\BM \mW$ since $[1] \times_\BM \mW \to [1]$ is a cartesian fibration. Hence  $\L([1] \times_\BM \mW)$ is contractible as it admits an initial object.	
	
\end{proof}

\begin{proof}[Proof of Proposition \ref{hzt}]
	
By Lemma \ref{lemora} the functor $\mG : \Delta_{/\BM} \to \mS$ takes values in 
$\Fin$ like $\mF$ does. So we need to construct a natural transformation $\gamma: \mF \to \mG$ of functors $ \Delta_{/\BM} \to \Fin,$ in particular, a natural transformation of functors between discrete $\infty$-categories.
Since $\mF, \mG$ preserve the colimit decomposition 
$$ \{0 < 1 \} \coprod_{\{1\}} \{ 1 < 2 \} \coprod_{\{2\}} ... \coprod_{\{\n-1\}  } \{ \n-1 < \n \} \simeq [\n],$$ the map $\gamma$ is determined by its components on $\alpha : [\n] \to \BM $ for $\n \in \{0,1\}.$

\vspace{1mm}

The case $\n=0$: in this case $\alpha$ corresponds to an object of $\BM$ that is given by a map $\varphi: [\bk] \to [1]$ in $\Delta$ and we write
$\mF(\varphi), \mG(\varphi), \gamma(\varphi)$ for $\mF(\alpha), \mG(\alpha), \gamma(\alpha).$



The map $\gamma(\varphi): \mF(\varphi) \to \mG(\varphi) $ is compatible with the
colimit decomposition $$ \{0 < 1 \} \coprod_{\{1\}} \{ 1 < 2 \} \coprod_{\{2\}} ... \coprod_{\{\bk-1\}} \{ \bk-1 < \bk \} \simeq [\bk] $$ in $\Delta_{/[1]}$
and thus is determined by all $\varphi: [\bk] \to [1]$ with $ \bk \in \{0,1\}.$

\vspace{2mm}

If $\bk = 0 $, we have $\mF(\varphi) = \emptyset $ and $\mG(\varphi)= \begin{cases} \ast, \ \varphi(0)=0 \\ \emptyset, \ \varphi(0)=1 \end{cases}$ so that there is only one choice for $\gamma(\varphi).$

If $\bk = 1 $, we have $\mF(\varphi) = \mG(\varphi)= \begin{cases} \ast \coprod \ast, \ \varphi=0 \\ \emptyset, \ \varphi=1, \\ \ast, \ \varphi=\id  \end{cases}$
and $\gamma(\varphi) $ is the identity.
\vspace{2mm}

The fiber of $\varphi$ over $0$ is of the form $[\ell]$ and let $0\leq \beta \leq 1$ be the cardinality of the set of all $1 \leq \bi \leq \bk $ with $\varphi(\bi-1) < \varphi(\bi)$. Then $\mF(\varphi)$ has $2\ell+\beta$ elements, which we denote by
$$ \emph{ } \hspace{11mm}  \{ \x_1^1, \x_1^2, ..., \x^1_{\ell}, \x^2_{\ell} \}, \hspace{15mm} (\beta = 0) $$
$$ \hspace{11mm} \{ \x_1^1, \x_1^2, ..., \x^1_{\ell}, \x^2_{\ell}, \x_{\ell+1} \} , \hspace{8mm} (\beta = 1),$$
where the set $ \{ \x_1^1, \x_1^2, ..., \x^1_{\ell}, \x^2_{\ell} \} $ is empty by convention if $\ell < 1.$

\vspace{2mm}

In contrast the set $\mG(\varphi) \simeq [0] \times_\BM \mW$ 
has $\ell+1$ elements and the map $\gamma(\varphi)$ identifies $\x^2_\bj $ with $\x^1_{\bj-1} $ for $2 \leq \bj \leq \ell$ and $\x_{\ell+1}$ with $\x^1_{\ell}.$
Set $ \x^1_0:=  \x^2_1$. Then $\gamma(\varphi)$ identifies $\x^2_\bj $ with $\x^1_{\bj-1} $ for $1 \leq\bj \leq \ell$.

So the image of $\gamma(\varphi)$ is the set 
$$\{ \x^1_0 , \x_1^1, \x^1_2 ...,  \x^1_{\ell-1},  \x^1_{\ell} \}.$$

So $\gamma(\varphi)$ factors as $\mF(\varphi) \to \{ \x^1_0 , \x_1^1, \x^1_2 ...,  \x^1_{\ell-1},  \x^1_{\ell} \} \simeq [\ell] = \mG(\varphi), $ where the last 
isomorphism is order preserving with the evident order on the left hand side.

\vspace{1mm}

The case $\n=1:$ in this case $\alpha$ corresponds to a morphism $\varphi \to \varphi'$ of $\BM$, which is given by a map $\phi: [\bk'] \to [\bk] $ over $ [1]$ in $\Delta$.
The fiber $\varphi^{-1}(\{0\}) \subset [\bk]$ is of the form $[\ell]$ for some $-1 \leq \ell \leq \bk$ and the fiber $\varphi'^{-1}(\{0\}) \subset [\bk']$ is of the form $[\ell']$ for $-1 \leq \ell' \leq \bk'.$
Let $0 \leq \beta \leq 1$ be the cardinality of the set of all $1 \leq \bi  \leq \bk $ with $\varphi(\bi-1) < \varphi(\bi)$ and $0 \leq \beta' \leq 1$ the cardinality of the set of all $1 \leq \bi  \leq \bk' $ with $\varphi'(\bi-1) < \varphi'(\bi)$.
The set $\mF(\alpha)$ is defined as the pushout of sets $$(\mF(\varphi) {\coprod} \mF(\varphi')) \coprod_{\mJ \times \{0,1\} } \mJ,$$ where $\mJ \subset (\mF(\varphi) {\coprod} \mF(\varphi'))^{\times2}$ is defined in Example \ref{Exxx} and has $\ell'+\phi(\ell'+\beta')-\phi(0) $ elements.

We define $\gamma(\alpha): \mF(\alpha) \to \mG(\alpha) \simeq \L({[1]} \times_\BM \mW)$ 
as the map of sets $$ \mF(\varphi) {\coprod} \mF(\varphi') \xrightarrow{ \gamma(\varphi) \coprod \gamma(\varphi') } \mG(\varphi) \coprod \mG(\varphi') \simeq$$$$ \L({\{0,1\}} \times_\BM \mW) \to \L({[1]} \times_\BM \mW) =\mG(\alpha) $$
and have to see that the maps $$ \mF(\varphi) \xrightarrow{\gamma(\varphi)}\mG(\varphi)= \L({\{0\}} \times_\BM \mW) \to \L({[1]} \times_\BM \mW) =\mG(\alpha) $$ 
and $$ \mF(\varphi') \xrightarrow{\gamma(\varphi')} \mG(\varphi')= \L({\{1\}} \times_\BM \mW) \to \L({[1]} \times_\BM \mW) =\mG(\alpha) $$ 
coincide.

For that we need to show that for every element $\T$ of $\mJ$ the map of sets $$ \{0,1\} \xrightarrow{\lambda^\alpha_\T} \mF(\varphi) {\coprod} \mF(\varphi') \xrightarrow{ \gamma(\varphi) \coprod \gamma(\varphi') } \mG(\varphi) \coprod \mG(\varphi') \simeq$$$$ \L({\{0,1\}} \times_\BM \mW) \to \L({[1]} \times_\BM \mW)  $$ is constant. The latter map corresponds to a pair $\T'=(\T'_1,\T'_2)$ and the map is constant if and only if $\T'_1\simeq \T'_2.$
We prove this in the following. Let $\mF(\varphi)= \{ \x_1^1, \x_1^2, ..., \x^1_{\ell}, \x^2_{\ell}, (\x_{\ell+1}) \} $ and $\mF(\varphi')= \{ \y_1^1, \y_1^2, ..., \y^1_{\ell'}, \y^2_{\ell'}, (\y_{\ell'+1}) \}.$


The canonical functor $$ \{ \x^1_0 , \x_1^1, \x^1_2 ...,  \x^1_{\ell-1},  \x^1_{\ell} \} \coprod \{ \y^1_0 , \y_1^1, \y^1_2 ...,  \y^1_{\ell'-1},  \y^1_{\ell'} \} $$$$ \simeq \mG(\varphi) \coprod \mG(\varphi') \simeq \L(\{0,1\} \times_\BM \mW) \to \L([1] \times_\BM \mW) $$ identifies $\y^1_{\bi} $ with $\x^1_{\phi(\bi)}  $for $0 \leq \bi \leq \ell'$ because the order preserving isomorphisms $$ \{ \x^1_0 , \x_1^1, \x^1_2 ...,  \x^1_{\ell-1},  \x^1_{\ell} \} \simeq  \{0\} \times_\BM \mW \simeq  \{0,..., \ell\}$$ and $$  \{ \y^1_0 , \y_1^1, \y^1_2 ...,  \y^1_{\ell'-1},  \y^1_{\ell'} \}\simeq  \{1\} \times_\BM \mW \simeq  \{0,..., \ell'\}$$ sent $\x^1_{\phi(\bi)}$ to $\phi(\bi)$ and $\y^1_{\bi}$ to $\bi$,
and the cartesian fibration $ [1] \times_\BM \mW \to [1]$
classifies the map $\{0,...,\ell'\} \to \{0,..., \ell\}$
induced by the map $\phi: [\bk'] \to [\bk] $ on the fiber over 0. 

In the following we use the definition of $\mJ$ of Example \ref{Exxx}.
Let $ 1 \leq \bi \leq \ell'.$ 
\begin{itemize}
\item If $\phi(\bi-1)= \phi(\bi)$ and $\T=( \y^1_\bi, \y^2_\bi),$
then $\T'_1 \simeq \T'_2$ since in $ \L({[1]} \times_\BM \mW)$ we have 
$  \y^1_\bi=\x^1_{\phi(\bi)} = \x^1_{\phi(\bi-1)} =  \y^1_{\bi-1}= \y^2_\bi.$
	
\item If $\phi(\bi-1) < \phi(\bi)$ and $$\T \in \mJ_\bi:= \{ (\y^1_\bi, \x^1_{\phi(\bi)}), (\x^2_{\phi(\bi-1)+1}, \y^2_{\bi}), (\x^2_\bj, \x^1_{\bj-1}), \phi(\bi-1)+2 \leq \bj \leq \phi(\bi)\},$$ then $\T'_1 \simeq \T'_2$ since in
$ \L({[1]} \times_\BM \mW)$ we have 
$ \y^1_\bi= \x^1_{\phi(\bi)},\x^2_{\phi(\bi-1)+1} = \x^1_{\phi(\bi-1)} = \y^1_{\bi-1} = \y^2_{\bi}$
and 
$ \x^2_\bj = \x^1_{\bj-1} $ for  $\phi(\bi-1)+2 \leq \bj \leq \phi(\bi) .$

\end{itemize}

This proves the case for $\beta=0.$
If $\beta'=1 $, in which case also $\beta=1$ and $$\T \in \mJ \setminus
\coprod_{1 \leq \bi \leq \ell'} \mJ_\bi= \{ (\x_{\ell+1}, \x^1_{\ell}), (\x^2_{\phi(\ell')+1}, \y_{\ell'+1}), (\x^2_\bj, \x^1_{\bj-1})\mid \ell \geq \bj \geq \phi(\ell')+2\},$$ 
then $\T'_1 \simeq \T'_2$ since in $ \L({[1]} \times_\BM \mW)$ we have
$$ \ \x_{\ell+1} = \x^1_{\ell},\x^2_{\phi(\ell')+1} = \x^1_{\phi(\ell')} = \y^1_{\ell'} =  \y_{\ell'+1}, \x^2_\bj= \x^1_{\bj-1} $$ for $ \phi(\ell')+2 \leq \bj\leq\ell.$ 
  
\end{proof}

\begin{notation}
Let $\X$ be a small space.
The pullback of the map $\zeta_\X: \BM_\X \to \mathfrak{BM}_\X$ of generalized $\BM$-operads along the embedding $\Ass \subset \LM \subset \BM$ gives a 
map $\sigma_\X: \Ass_\X \to \mathfrak{Ass}_\X$ of generalized $\Ass$-operads.
	
\end{notation}

\begin{remark}\label{Not}
Let $\X$ be a small space.
By Remark \ref{niu} there is an equivalence
$(\Ass_\X)^\rev\simeq \Ass_\X$ of generalized $\Ass$-operads.
There is also an equivalence
$(\mathfrak{Ass}_\X)^\rev \simeq \mathfrak{Ass}_\X$ of $\Ass$-operads.
Let $\iota:\Ass \subset \LM \subset \BM.$ This follows from the facts that $\Ass_\X= \Ass \times_{\BM} \BM_\X \to \Ass $ viewed as object of $ \Cat_{\infty / \Ass} \subset \mP(\Delta_{/ \Ass})$ is the composition $\Delta_{/\Ass} \xrightarrow{\iota_!} \Delta_{/\BM} \xrightarrow{\L\circ \F} \Cat_\infty$ and that $\L\circ \F \circ \iota_! \simeq \L\circ \F \circ \iota_! \circ \tau_!.$
The composition $ (\Ass_\X)^\rev\simeq \Ass_\X \xrightarrow{\sigma_\X} \mathfrak{Ass}_\X \simeq (\mathfrak{Ass}_\X)^\rev$ is $(\sigma_\X)^\rev.$
	
\end{remark}

\begin{remark}\label{henno}
Let $\X,\Y$ be small spaces.
The pullback of the map $\zeta_\X: \BM_\X \to \mathfrak{BM}_\X$ of generalized $\BM$-operads along the embedding $\Ass \subset \RM \subset \BM$ gives the 
identity of $\Ass.$ 
Hence the pullback of the map $\xi_{\X,\Y}: \BM_{\X,\Y} \to \mathfrak{BM}_{\X,\Y}$ of generalized $\BM$-operads along the embedding $\Ass \subset \LM \subset \BM$ is $\sigma_\X: \Ass_\X \to \mathfrak{Ass}_\X$
and using Remark \ref{Not} the pullback of $\xi_{\X,\Y}$ along the embedding $\Ass \subset \RM \subset \BM$ is $\sigma_\Y: \Ass_\Y \to \mathfrak{Ass}_\Y.$	
\end{remark}


\begin{remark}
	
Let $\X,\Y$ be small spaces.
Taking coproduct gives rise to a map of left fibrations
$$\BM_{\X,\Y} = \BM \times_{(\mS^\op \times \mS^\op)} (\mS_{/\X})^\op \times (\mS_{/\Y})^\op  \to \Ass_{\X\coprod\Y} \simeq \Ass \times_{\mS^\op}(\mS_{/\X\coprod\Y})^\op$$
over $\Ass$ natural in $\X,\Y \in \mS$.  
The induced map of left fibrations $$\BM_{\X,\Y} \to \BM \times_{\Ass_{*\coprod *}} \Ass_{\X\coprod \Y}$$ over $\BM$ is an equivalence
as it induces on the fiber over any $\alpha: [\n]*[\m]\to [0]*[0]=[1]$ 
the following equivalence induced by the embeddings $\X,\Y \subset \X \coprod \Y:$
$$\Fun(\{0,...,\n\},\X)\times \Fun(\{0,...,\m\},\Y) \simeq$$$$
\{\alpha\}\times_{\Fun(\{0,...,\n\}\coprod \{0,...,\m\},* \coprod*)} \Fun(\{0,...,\n\}\coprod \{0,...,\m\},\X \coprod\Y).$$
By \cite[Remark 2.2.10]{stefanich2020} there is a canonical equivalence of generalized $\BM$-operads
$$\mathfrak{BM}_{\X,\Y} \simeq \BM \times_{\mathfrak{Ass}_{*\coprod *}} \mathfrak{Ass}_{\X\coprod \Y}.$$
One can prove that there is a commutative square of generalized $\BM$-operads:
$$
\begin{xy}
\xymatrix{
\BM_{\X,\Y} \ar[d]^{\xi_{\X,\Y}}
\ar[rr]^\simeq
&& \BM \times_{ \Ass_{*\coprod *}} \Ass_{\X\coprod\Y} \ar[d]^{\BM \times_{\sigma_{*\coprod*}}\sigma_{\X\coprod\Y}}
\\
\mathfrak{BM}_{\X,\Y} \ar[rr]^\simeq && \BM \times_{\mathfrak{Ass}_{*\coprod *}} \mathfrak{Ass}_{\X\coprod\Y}.}
\end{xy} $$
Consequently, the map $\xi_{\X,\Y}$ is uniquely determined by the maps
$\sigma_\Z$ for some small spaces $\Z.$

\end{remark}

For the next notation we call a functor $\mC \to \mD$ flat if
the pullback functor $(-)\times_\mD \mC: \Cat_{\infty / \mD} \to \Cat_{\infty / \mC}
$ preserves small colimits (see \cite[B.3.]{lurie.higheralgebra} or \cite[Definition 2.33.]{heine2023monadicity} for the study of flat functors). For $\mO \in \{\Ass,\BM\}$ we call a generalized $\mO$-operad $\phi: \mU \to \mO$ flat if $\phi$ is flat.

\begin{notation}\label{Notat}
Let $\mO \in \{\Ass,\BM\}$ and $\mU \to \mO$ a flat generalized $\mO$-operad. 
By \cite[Theorem 10.13.]{HEINE2023108941} the functor $(-)\times_\mO \mU : \Op_\infty^{\mO,\gen} \to \Op_\infty^{\mO,\gen}$ admits a right adjoint denoted by $\Fun^\mO(\mU,-) $.

\end{notation}

\cite[Remark 10.5.]{HEINE2023108941} implies the following remark:
\begin{remark}\label{reno}
Let $\mO \in \{\Ass,\BM\}$ and $\mU \to \mO$ a flat generalized $\mO$-operad. 
	
\begin{enumerate}
\item If $\mU' \to \mO$ is an $\mO$-operad, $\Fun^\mO(\mU,\mU') \to \mO$ is an $\mO$-operad and for every $\Z \in \mO$ lying over $[1]\in \Ass$ the fiber $\Fun^\mO(\mU,\mU')_\Z$ is $\Fun(\mU_\Z,\mU'_\Z).$

\item If $\mU' \to \BM$ is a $\BM$-operad, the pullback of $\Fun^\BM(\mU,\mU') \to \BM$ along any of the two embeddings
$\Ass \subset \BM$ is the $\Ass$-operad $$\Fun^\Ass(\Ass \times_\BM \mU,\Ass \times_\BM \mU') \to \Ass.$$
\end{enumerate}

\end{remark}

\begin{remark}\label{Renon}
Let $\mU \to \BM$ be a flat generalized $\BM$-operad and $\mU' \to \BM$ a $\BM$-monoidal $\infty$-category compatible with small colimits.
By \cite[Proposition 10.20.]{HEINE2023108941} the functor $\Fun^\BM(\mU,\mU') \to \BM$ is a locally cocartesian fibration. 
Let $\mU_\act \subset \mU$ be the subcategory of morphisms of $\mU$ whose image 
in $\BM$ corresponds to an order preserving map $[\n]\to [\m]$ over $[1]$
that preserves the minimum and maximum.

For every $\F,\F'\in \Fun(\mU_{\mathfrak{a}}, \mU'_{\mathfrak{a}}),
\rH \in \Fun(\mU_{\mathfrak{m}}, \mU'_{\mathfrak{m}}), \G \in \Fun(\mU_{\mathfrak{b}}, \mU'_{\mathfrak{b}}), \T \in \mU_{\mathfrak{a}}, \Z \in \mU_{\mathfrak{m}}$ by \cite[Lemma 10.21.]{HEINE2023108941} there are canonical equivalences
$$ (\F \ot \F')(\T) \simeq \colim_{(\X,\Y) \in (\mU_{\mathfrak{a}}\times\mU_{\mathfrak{a}}) \times_{\mU} \mU^\act_{/\T}}\F(\X)\ot \F'(\Y),$$
$$ (\F \ot \rH)(\Z) \simeq \colim_{(\X,\Y) \in (\mU_{\mathfrak{a}}\times\mU_{\mathfrak{m}}) \times_{\mU} \mU^\act_{/\Z}}\F(\X)\ot \rH(\Y),$$
$$ (\rH \ot \G)(\Z) \simeq \colim_{(\X,\Y) \in (\mU_{\mathfrak{m}}\times\mU_{\mathfrak{b}}) \times_{\mU} \mU^\act_{/\Z}}\rH(\X)\ot \G(\Y).$$

\end{remark}


\begin{notation}\label{const}
Let $\X,\Y$ be small spaces, $\mO \to \BM$ a $\BM$-operad and $\mV \to \Ass$
an $\Ass$-operad.	

\begin{enumerate}
\item The map $\xi_{\X,\Y}: \BM_{\X,\Y} \to \mathfrak{BM}_{\X,\Y}$ of generalized $\BM$-operads of Proposition \ref{Prop} induces a map of $\BM$-operads $$\gamma^{\X,\Y}_\mO: \Fun^{\BM}(\mathfrak{BM}_{\X,\Y}, \mO) \to \Fun^{\BM}(\BM_{\X,\Y}, \mO).$$

\item The map $\sigma_{\X}: \Ass_{\X} \to \mathfrak{Ass}_{\X}$
of generalized $\Ass$-operads induces a map of $\Ass$-operads 
$$\beta^\X_\mV: \Fun^{\Ass}(\mathfrak{Ass}_{\X},\mV) \to \Fun^{\Ass}(\Ass_\X,\mV).$$

\item The map $\beta^\X_\mV$ induces on associative algebras the functor $$\alpha^\X_\mV: \Alg_{\mathfrak{Ass}_{\X}}(\mV) \to \Alg_{\Ass_\X}(\mV)$$ induced by $\Ass \times_\BM \xi_{\X,*}: \Ass_{\X} \to \mathfrak{Ass}_{\X}.$
\end{enumerate}
\end{notation}



Remarks \ref{henno} and \ref{reno} imply the following remark:
\begin{remark}\label{bbb}
Let $\X,\Y$ be small spaces, $\mO \to \BM$ a $\BM$-operad and $\mV \to \Ass$ an $\Ass$-operad.
	
\begin{enumerate}
\item The map $\beta^\X_\mV: \Fun^{\Ass}(\mathfrak{Ass}_{\X},\mV) \to \Fun^{\Ass}(\Ass_\X,\mV)$ of $\Ass$-operads induces on the fiber over $\mathfrak{a}$ the identity
$$\Fun(\X\times\X,\mV_{\mathfrak{a}}) \simeq \Fun((\mathfrak{Ass}_{\X})_{\mathfrak{a}},\mV_{\mathfrak{a}}) \to 
\Fun((\Ass_{\X})_{\mathfrak{a}},\mV_{\mathfrak{a}})\simeq \Fun(\X\times\X,\mV_{\mathfrak{a}})$$
since $(\mathfrak{Ass}_{\X})_{\mathfrak{a}} \simeq \X \times \X \simeq (\Ass_{\X})_{\mathfrak{a}}.$
		
\item The map $\gamma^{\X,\Y}_\mO$ of $\BM$-operads induces on the fiber over $\mathfrak{m}$ the identity
$$\Fun(\X\times\Y,\mO_{\mathfrak{m}}) \simeq \Fun((\mathfrak{BM}_{\X,\Y})_{\mathfrak{m}},\mO_{\mathfrak{m}}) \to 
\Fun((\BM_{\X,\Y})_{\mathfrak{m}},\mO_{\mathfrak{m}})$$$$\simeq \Fun(\X\times\Y,\mO_{\mathfrak{m}})$$
since $(\mathfrak{BM}_{\X,\Y})_{\mathfrak{m}} \simeq \X \times \Y \simeq (\BM_{\X,\Y})_{\mathfrak{m}}.$
		
		
		
\item The pullback of $\gamma^{\X,\Y}_\mO$ along the embedding $\Ass \subset\LM \subset \BM$ is the map $\beta^\X_{\Ass\times_\BM\mO}: \Fun^{\Ass}(\mathfrak{Ass}_{\X},\Ass \times_\BM\mO) \to \Fun^{\Ass}(\Ass_\X,\Ass\times_\BM\mO)$ and the pullback of $\gamma^{\X,\Y}_\mO$ along the embedding $\Ass \subset \RM \subset \BM$ is $\beta^\Y_{\Ass\times_\BM\mO}$.
		
\end{enumerate}
\end{remark}


\begin{proposition}\label{leum}
Let $\X,\Y$ be small spaces and $\mO \to \BM$ a $\BM$-monoidal $\infty$-category
compatible with small colimits.
The map of $\BM$-operads $$\gamma^{\X,\Y}_\mO: \Fun^{\BM}(\mathfrak{BM}_{\X,\Y}, \mO) \to \Fun^{\BM}(\BM_{\X,\Y}, \mO)$$
is a $\BM$-monoidal equivalence between $\BM$-monoidal $\infty$-categories.

For every $\F,\F' \in \Fun(\X\times\X, \mO_{\mathfrak{a}}), \G \in \Fun(\Y\times\Y, \mO_{\mathfrak{b}}), \rH \in \Fun(\Y\times\X, \mO_{\mathfrak{m}})$ we have the following descriptions:
$$ \F \ot \F': \X \times \X \to \mO_{\mathfrak{a}}, (\A,\A') \mapsto 
\colim_{\Z \in \X} \F(\Z,\A') \ot \F'(\A,\Z),$$$$ 
\F \ot \rH: \Y \times \X \to \mO_{\mathfrak{m}}, (\A,\A') \mapsto 
\colim_{\Z \in \X} \F(\Z,\A') \ot \rH(\A,\Z),$$$$ 
\rH \ot \G: \Y \times \X \to \mO_{\mathfrak{m}}, (\A,\A') \mapsto 
\colim_{\Z \in \Y} \rH(\Z,\A') \ot \G(\A,\Z).$$

\end{proposition}

\begin{proof}
The source of $\gamma^{\X,\Y}_{\mO}$ is a $\BM$-monoidal $\infty$-category
by \cite[Theorem 4.4.8]{HINICH2020107129}. The target of $\gamma^{\X,\Y}_{\mO}$ is a $\BM$-monoidal $\infty$-category by \cite[Remark 4.27.]{HEINE2023108941}
and the tensor product and left and right actions have the descriptions as above.
The latter descriptions also immediately follow from Remark \ref{Renon} (3).
The functor $\gamma^{\X,\Y}_{\mO}$ induces an equivalence on the fiber over any object of $\BM$ by Remark \ref{bbb} and so is an equivalence if
it is a $\BM$-monoidal functor.
Let $\mV \to \Ass$ be the pullback of $\mO\to \BM$ along the embedding $\Ass \subset \LM \subset \BM$ and $\mW \to \Ass$ be the pullback of $\mO\to \BM$ along the embedding $\Ass \subset \RM \subset \BM.$
Hence $\gamma^{\X,\Y}_{\mO}$ is a $\BM$-monoidal functor if for every $\F,\F' \in \Fun(\X\times\X, \mO_{\mathfrak{a}}), \G \in \Fun(\Y\times\Y, \mO_{\mathfrak{b}}), \rH \in \Fun(\Y\times\X, \mO_{\mathfrak{m}})$ the canonical morphisms 
\begin{equation}\label{e1}
\beta_{\mV}^{\X}(\F) \ot \beta_{\mV}^{\X}(\F') \to \beta_{\mV}^{\X}(\F \ot \F')\end{equation} in $ \Fun(\X\times\X, \mO_{\mathfrak{a}})$
and
\begin{equation}\label{e2}
\beta_{\mV}^{\X}(\F) \ot \delta_\mO^{\X,\Y}(\rH) \to \delta_\mO^{\X,\Y}(\F \ot \rH),
\end{equation}
\begin{equation}\label{e3}
\delta_\mO^{\X,\Y}(\rH) \ot \beta_\mW^\Y(\G) \to \delta_\mO^{\X,\Y}(\rH \ot \G)
\end{equation} in $ \Fun(\Y\times\X, \mO_{\mathfrak{m}})$
are equivalences.

Evaluating (\ref{e1}) at $(\A,\A')\in \X \times \X$ by Remark \ref{Renon} (3)
we obtain the morphism
$$ \colim_{\Z \in \X} \F(\Z,\A') \ot \F'(\A,\Z) \to
\colim_{(\Z \to \Z') \in \Fun([1],\X)} \F(\Z',\A') \ot \F'(\A,\Z).$$
The latter composition is induced by the diagonal embedding $\X \to \Fun([1],\X)$,
which is an equivalence since $\X$ is a space.

Evaluating (\ref{e2}) at $(\A,\A')\in \Y \times \X$ by Remark \ref{Renon} (3)
gives the morphism
$$ \colim_{\Z \in \X} \F(\Z,\A') \ot \rH(\A,\Z) \to
\colim_{(\Z \to \Z')\in \Fun([1],\X)} \F(\Z',\A') \ot \rH(\A,\Z).$$
The latter is an equivalence by the reason as above.
Evaluating (\ref{e3}) at $(\A,\A')\in \Y \times \X$ by Remark \ref{Renon} (3)
gives the morphism
$$ \colim_{\Z \in \Y} \rH(\Z,\A') \ot \G(\A,\Z) \to
\colim_{(\Z \to \Z')\in \Fun([1],\X)} \rH(\Z',\A') \ot \G(\A,\Z). $$
The latter is an equivalence by the reason as above.

\end{proof}

\begin{theorem}\label{theo}
	
Let $\X,\Y$ be small spaces.
The map $$\gamma^{\X,\Y}_\mO: \Fun^{\BM}(\mathfrak{BM}_{\X,\Y},\mO) \to \Fun^{\BM}(\BM_{\X,\Y},\mO)$$ of $\BM$-operads is an equivalence.
	
\end{theorem}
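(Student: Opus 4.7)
The strategy is to translate the claim, via the defining adjunction, into an equivalence of mapping spaces, then reduce by density and the Segal decomposition to a combinatorial verification on cells of $\Delta_{/\BM}$.

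Since $\gamma_\mO$ is obtained from $\xi_\X$ under the adjunctions of Construction \ref{const}, the Yoneda lemma shows that $\gamma_\mO$ is an equivalence of $\BM$-operads if and only if for every generalized $\BM$-operad $\mP$ the induced map
\[
\Map_{\Op^{\BM,\gen}_\infty}(\mP \times_\BM \B\mM_\X,\, \mO) \longrightarrow \Map_{\Op^{\BM,\gen}_\infty}(\mP \times_\BM \BM_\X,\, \mO)
\]
is an equivalence. The functors $\mP \mapsto \mP \times_\BM (-)$ are left adjoints and so preserve colimits, so via the fully faithful embedding $\Op^{\BM,\gen}_\infty \hookrightarrow \mP(\Delta_{/\BM})$ it suffices to check this when $\mP = \alpha$ ranges over the representables $\alpha \in \Delta_{/\BM}$. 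Applying the decomposition
\[
[\n] \simeq \{0 < 1\} \coprod_{\{1\}} \{1 < 2\} \coprod_{\{2\}} \cdots \coprod_{\{\n-1\}} \{\n-1 < \n\}
\]
in $\Delta_{/\BM}$, which is preserved by both $\mF$ and $\mG$ (the latter by Lemma \ref{lemora}), reduces further to $\alpha : [\n] \to \BM$ with $\n \in \{0,1\}$.

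For $\n = 0$, $\alpha$ is an object $\varphi: [\bk] \to [1]$ of $\BM$, and the induced map is essentially precomposition along $\gamma(\varphi): \mF(\varphi) \to \mG(\varphi)$, a surjection that identifies $\x^2_j$ with $\x^1_{j-1}$ at every junction. The key point is that a $\BM$-operad $\mO$ satisfies the Segal condition along the inert decomposition of $\varphi$ into its pieces $\{\bi-1 < \bi\}$, and this Segal condition is precisely what forces the data labeled by $\x^2_j$ and by $\x^1_{j-1}$ to agree once mapped into $\mO$. For $\n = 1$, one carries out an analogous but lengthier bookkeeping with the pushout formula for $\mF(\alpha)$ and the explicit gluing map $\lambda^\alpha$; each pair identified by $\gamma$ corresponds to an inert cocartesian lift in $\mO$, so no information is lost.

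The hard part is the combinatorial verification in the last step: one must check, pair by pair, that every identification imposed by $\gamma$ matches a Segal-type constraint automatically satisfied by the $\BM$-operad $\mO$, and conversely that no additional data is required. The careful labeling of elements of $\mF(\alpha)$ by the $\x^i_j$ notation introduced before the theorem is tailored precisely to make this matching tractable, so once the set-up is in place the verification is a matter of unwinding the pushout defining $\mF(\alpha)$ against the cocartesian structure of $\mO \to \BM$.
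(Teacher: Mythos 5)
There is a genuine gap in your reduction step. The adjunction $(-)\times_\BM \B\mM_\X \dashv \Fun^{\BM}(\B\mM_\X,-)$ from Construction \ref{const} lives in $\Op_\infty^{\BM,\gen}$, so the Yoneda-type criterion it gives you involves mapping spaces $\Map_{\Op^{\BM,\gen}_\infty}(\mP,-)$ for $\mP$ a generalized $\BM$-operad. The objects $\alpha\in\Delta_{/\BM}$ you want to test against are \emph{not} generalized $\BM$-operads, and the fully faithful restricted Yoneda embedding $\Cat_{\infty/\BM}\hookrightarrow\mP(\Delta_{/\BM})$ only detects equivalences via $\Cat_{\infty/\BM}$-mapping spaces, for which the adjunction identity $\Map(\alpha,\Fun^{\BM}(\B\mM_\X,\mO))\simeq\Map(\alpha\times_\BM\B\mM_\X,\mO)$ is simply not available. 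Likewise, colimit-preservation of $(-)\times_\BM\B\mM_\X$ holds for colimits in $\Op_\infty^{\BM,\gen}$, which are not computed in $\Cat_{\infty/\BM}$ or $\mP(\Delta_{/\BM})$, and a generalized $\BM$-operad is not a colimit of representables inside $\Op_\infty^{\BM,\gen}$; so the density argument does not go through as stated. Finally, even granting the reduction, your $\n=0,1$ step only matches identifications of \emph{colors} against Segal conditions; the actual content of the theorem is about multimorphism spaces, and the concluding ``pair by pair'' verification is asserted rather than carried out — this is exactly the part where the hypothesis that $\mO$ is a $\BM$-operad (not merely a generalized one) must enter, and your sketch does not show where.

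For contrast, the paper avoids mapping out of products altogether: it first checks that $\gamma_\mO$ is an equivalence on the fibers over $\mathfrak{a},\mathfrak{b},\mathfrak{m}$ (since $\xi_\X$ is a fiberwise equivalence), and then handles the multimorphism spaces by embedding $\mO$ into $\mP\Env_\BM(\mO)$, the Day-convolution presheaves on the enveloping $\BM$-monoidal $\infty$-category. For a $\BM$-monoidal $\infty$-category compatible with small colimits both internal homs are again $\BM$-monoidal and $\gamma_{\mP\Env_\BM(\mO)}$ is a $\BM$-monoidal functor, so being a fiberwise equivalence already makes it an equivalence of cocartesian fibrations; $\gamma_\mO$ is then recovered as a pullback of $\gamma_{\mP\Env_\BM(\mO)}$. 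If you want to salvage your approach, you would need a substitute for the density step — e.g.\ expressing generalized $\BM$-operads as colimits of free ones on representables and computing $\Map_{\Op^{\BM,\gen}_\infty}(\alpha\times_\BM\B\mM_\X,\mO)$ explicitly — which amounts to redoing a substantial part of Hinich's analysis rather than a routine unwinding.
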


\begin{proof}
We first reduce to the case that $\mO\to\BM$ is a $\BM$-monoidal $\infty$-category compatible with small colimits.
By Remark \ref{bbb} 
the map $\gamma_\mO^{\X,\Y}$ of $\BM$-operads induces on the fiber over any object of $\BM$ an equivalence. 
	
Let $\Env_\BM(\mO) \to \BM$ be the enveloping $\BM$-monoidal $\infty$-category
\cite[Notation 3.91.]{HEINE2023108941}
associated to the $\BM$-operad $\mO \to \BM$ that comes equipped with an embedding of $\BM$-operads $\mO \subset \Env_\BM(\mO)$.
Let $\mP\Env_\BM(\mO) \to \BM$ be the $\infty$-category of presheaves endowed with Day-convolution on the enveloping $\BM$-monoidal $\infty$-category, which is a $\BM$-monoidal $\infty$-category compatible with small colimits and comes equipped with an embedding of $\BM$-operads $\mO \subset \mP\Env_\BM(\mO)$
\cite[Notation 3.109.]{HEINE2023108941}.
The embedding $\mO \subset \mP\Env_\BM(\mO)$ of $\BM$-operads induces embeddings of $\BM$-operads
$$\Fun^{\BM}(\BM_{\X,\Y}, \mO) \subset \Fun^{\BM}(\BM_{\X,\Y}, \mP\Env_\BM(\mO)),$$$$
\Fun^{\BM}(\mathfrak{BM}_{\X,\Y}, \mO) \subset \Fun^{\BM}(\mathfrak{BM}_{\X,\Y}, \mP\Env_\BM(\mO)).$$

The map of $\BM$-operads $\gamma^{\X,\Y}_\mO$ is the pullback of the map of $\BM$-operads $\gamma^{\X,\Y}_{\mP\Env_\BM(\mO)}$ because $\gamma_\mO, \gamma_{\mP\Env_\BM(\mO)}$ induce fiberwise equivalences.
Therefore $\gamma^{\X,\Y}_\mO$ is an equivalence of $\BM$-operads if $\gamma^{\X,\Y}_{\mP\Env_\BM(\mO)}$ is an equivalence of $\BM$-operads.
Consequently, to prove that $\gamma^{\X,\Y}_\mO$ is an equivalence we can assume that
$\mO \to \BM$ is a $\BM$-monoidal $\infty$-category compatible with small colimits.
Since the functor $\mO \to \BM$ is a $\BM$-monoidal $\infty$-category compatible with small colimits, Lemma \ref{leum} guarantees that $\gamma^{\X,\Y}_{\mO}$ is a $\BM$-monoidal functor of $\BM$-monoidal $\infty$-categories.
Since $\gamma^{\X,\Y}_{\mO}$ induces fiberwise equivalences, it is an equivalence.
 
\end{proof}

\begin{notation}
	
Let $\mO \to \BM$ be a $\BM$-operad that exhibits
an $\infty$-category $\mD$ as weakly bitensored over $\Ass$-operads $\mV\to \Ass, \mW \to \Ass$.
Let $\mC$ be a $\mV$-enriched $\infty$-precategory with small space of objects $\X.$
We set $$\Fun_{\mathrm{Hin}}^\mV(\mC,\mD) := \LMod_\mC(\Fun^{\BM}(\mathfrak{BM}_{\X,*},\mO))$$
and 
$$\Fun^\mV(\mC,\mD):= \LMod_\mC(\Fun^{\BM}(\BM_{\X,*},\mO)).$$
\end{notation}

\begin{remark}Let $\mO \to \BM$ be a $\BM$-monoidal $\infty$-category compatible with small colimits.
Using the descriptions of the left actions of Proposition \ref{leum}
an object of $\Fun^\mV(\mC,\mD)$, i.e. a left $\mC$-module in $\Fun(\X,\mD)$,
is a functor $\rH:\X \to \mD$ and a coherently unital and associative action map
$\mu: \mC \ot \rH \to \rH$ whose component at any $\Z \in \X$
gives a morphism $\colim_{\Z\in \X} \mC(\A, \Z) \ot \rH(\A) \to \rH(\Z)$ in $\mD$
corresponding to a compatible family $(\mC(\A, \Z) \ot \rH(\A) \to \rH(\Z))_{\A,\Z\in \X}$ in $\mD.$
	
\end{remark}

\begin{corollary}\label{Hi}
	
Let $\mO \to \BM$ be a $\BM$-operad that exhibits
an $\infty$-category $\mD$ as weakly bitensored over $\Ass$-operads $\mV \to \Ass, \mW \to \Ass$.
Let $\mC$ be a $\mV$-enriched $\infty$-precategory with small space of objects $\X.$
There is a canonical equivalence $$\Fun_{\mathrm{Hin}}^\mV(\mC,\mD) \simeq \Fun^\mV(\mC,\mD)$$
of $\infty$-categories weakly right tensored over $\mW.$
	
\end{corollary}

\begin{proof}The canonical equivalence $$\gamma^{\X,*}_\mO: \Fun^{\BM}(\mathfrak{BM}_{\X,*},\mO) \to \Fun^{\BM}(\BM_{\X,*},\mO)$$ of $\BM$-operads induces an equivalence $$\LMod_\mC(\Fun^{\BM}(\mathfrak{BM}_{\X,*},\mO)) \simeq \LMod_\mC(\Fun^{\BM}(\BM_{\X,*},\mO))$$
of $\infty$-categories weakly right tensored over $\mW.$
	
\end{proof}

Theorem \ref{theo} implies Macpherson's result \cite[Theorem 1.1.]{macpherson_2020}:

\begin{corollary}\label{Fea}
Let $\X$ be a small space and $\mV \to \Ass$ an $\Ass$-operad.
The map $$\beta_\mV: \Fun^{\Ass}(\mathfrak{Ass}_{\X},\mV) \to \Fun^{\Ass}(\Ass_\X,\mV)$$ of $\Ass$-operads is an equivalence.
In particular, the functor $$\alpha_\mV: \Alg_{\mathfrak{Ass}_{\X}}(\mV) \to \Alg_{\Ass_\X}(\mV)$$ is an equivalence.

\end{corollary}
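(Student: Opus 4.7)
The plan is to deduce the equivalence $\beta_\mV$ from Theorem \ref{theo} by pulling back along the left embedding $\Ass \subset \BM$, and then to derive the statement about associative algebras as a formal consequence. First I would associate to the non-symmetric $\infty$-operad $\mV^\ot \to \Ass$ a $\BM$-operad $\tilde \mV \to \BM$ that exhibits $\mV$ as weakly bitensored over $\mV, \mV$ via the canonical regular biaction of $\mV$ on itself, chosen so that the pullback $\Ass \times_\BM \tilde \mV$ along the left embedding recovers $\mV$. Applying Theorem \ref{theo} to $\mO = \tilde \mV$ then yields an equivalence of $\BM$-operads $\gamma_{\tilde \mV} : \Fun^{\BM}(\B\mM_\X, \tilde \mV) \xrightarrow{\sim} \Fun^{\BM}(\BM_\X, \tilde \mV)$.

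The core step is then to establish the base-change identification
$$\Ass \times_\BM \Fun^{\BM}(\BM_\X, \tilde \mV) \simeq \Fun^{\Ass}(\Ass_\X, \mV),$$
and the analogous equivalence with $\BM_\X$ replaced by $\B\mM_\X$ on the left and $\Ass_\X$ by $\mA \mathrm{ss}_\X$ on the right. I would verify this by a Yoneda argument using the adjunction of \cite[Theorem 10.13.]{HEINE2023108941}: for any generalized non-symmetric $\infty$-operad $\mP$, regarded as a generalized $\BM$-operad via the left embedding, one has $\mP \times_\BM \BM_\X \simeq \mP \times_\Ass \Ass_\X$; since this is concentrated over $\Ass$, maps from it into $\tilde \mV$ as a $\BM$-operad factor uniquely through the left fiber $\mV = \Ass \times_\BM \tilde \mV$. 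Chasing adjunctions on both sides then shows that $\Ass \times_\BM \gamma_{\tilde \mV}$ is canonically identified with $\beta_\mV$, proving the first assertion.

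For the second assertion I would invoke the universal property of $\Fun^{\Ass}$ in the form $\Alg_\mP(\mV) \simeq \Alg(\Fun^{\Ass}(\mP, \mV))$ for any generalized non-symmetric $\infty$-operad $\mP$, which is immediate from the adjunction of \cite[Theorem 10.13.]{HEINE2023108941} upon mapping in from $\Ass$. Specializing to $\mP = \Ass_\X$ and $\mP = \mA \mathrm{ss}_\X$ and applying $\Alg$ to the equivalence $\beta_\mV$ of non-symmetric $\infty$-operads yields that $\alpha_\mV$ is an equivalence. The main obstacle is the base-change compatibility in the second paragraph; once this is in place the rest is formal. I expect this compatibility to be either explicit in, or a direct consequence of, the construction of the right adjoint in \cite[Theorem 10.13.]{HEINE2023108941}, by the fact that the left embedding $\Ass \subset \BM$ interacts well with the fibre products defining both adjunctions.
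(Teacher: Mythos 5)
Your proposal is correct and follows essentially the same route as the paper: apply Theorem \ref{theo} to the $\BM$-operad obtained from $\mV^\ot \to \Ass$ by pullback along $\BM \to \Ass$ (the regular weak biaction of $\mV,\mV$ on $\mV$, which is your $\tilde{\mV}$) and then restrict along the left embedding $\Ass \subset \BM$. The only difference is that the base-change identification $\Ass \times_\BM \Fun^{\BM}(\BM_\X,\tilde{\mV}) \simeq \Fun^{\Ass}(\Ass_\X,\mV)$ (and its $\B\mM_\X$/$\mA\mathrm{ss}_\X$ analogue), which you re-derive by the Yoneda/adjunction argument, is in the paper simply cited as \cite[Remark 10.5.]{HEINE2023108941}.
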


\begin{proof}
Let $\mO \to \BM$ be the pullback of $\mV \to \Ass$ along the canonical map $\BM \to \Ass$. 
By Theorem \ref{theo} the map $\gamma^{\X,\Y}_\mO: \Fun^{\BM}(\mathfrak{BM}_{\X,\Y},\mO) \to \Fun^{\BM}(\BM_{\X,\Y},\mO)$ is an equivalence.
By Remark \ref{bbb} the pullback of $\gamma^{\X,\Y}_\mO: \Fun^{\BM}(\mathfrak{BM}_{\X,\Y},\mO) \to \Fun^{\BM}(\BM_{\X,\Y},\mO)$ along the embedding $\Ass \subset \LM \subset \BM$ is the map
of $\Ass$-operads $\beta^\X_\mV: \Fun^{\Ass}(\mathfrak{Ass}_{\X},\mV) \to \Fun^{\Ass}(\Ass_\X,\mV)$.
	
\end{proof}


\bibliographystyle{plain}









\end{document}